\tikzstyle{vertex}=[circle, draw, inner sep=0pt, minimum size=6pt]
\newcommand{\vertex}{\node[vertex]}
\newtheorem{theorem}{Theorem}[section]
\newtheorem{lemma}[theorem]{Lemma}
\newtheorem{corollary}[theorem]{Corollary}
\newtheorem{observation}[theorem]{Observation}
\theoremstyle{definition}
\newtheorem{definition}{Definition}[section]
\theoremstyle{remark}
\let\originalleft\left
\let\originalright\right
\renewcommand{\left}{\mathopen{}\mathclose\bgroup\originalleft}
\renewcommand{\right}{\aftergroup\egroup\originalright}
\newcommand{\rel}{\mathrm{nRel}}
\newcommand{\crel}{\mathrm{Rel}}
\begin{document}

\title{The Shape of Node Reliability}
\author{Jason Brown and Lucas Mol} 
\date{}

\maketitle

\begin{abstract}
Given a graph $G$ whose edges are perfectly reliable and whose nodes each operate independently with probability $p\in[0,1],$ the \textit{node reliability} of $G$ is the probability that at least one node is operational and that the operational nodes can all communicate in the subgraph that they induce.  We study analytic properties of the node reliability on the interval $[0,1]$ including monotonicity, concavity, and fixed points.  Our results show a stark contrast between this model of network robustness and models that arise from coherent set systems (including all-terminal, two-terminal and K-terminal reliability).
\end{abstract}

\textit{Keywords:} graph theory, node reliability, all-terminal reliability, coherence, $S$-shaped

\textit{MSC 2010:} 05C31

\section{Introduction}

There are a number of models of probabilistic network reliability based on the premise that in a (finite, undirected) graph, the nodes are always operational, but edges are independently operational with probability $p$ (failure in such a system may be due to random failures of components in the links joining nodes).  The well known \textit{all-terminal reliability} asks for the probability that all vertices can communicate with one another (that is, at least a spanning tree is operational).  This model generalizes to $K$-\textit{terminal reliability}, which asks the probability that all vertices in some particular subset $K$ can communicate with one another (we call the vertices in $K$ the \textit{target nodes}, with the target nodes ranging from two particular vertices in the well-studied \textit{two-terminal reliability} to the entire vertex set for all-terminal reliability).  An excellent survey of these measures can be found in \cite{ColbournBook}.

These models of reliability fit under the umbrella of \textit{coherence}. Let $X$ be a finite ground set; a \textit{coherent set system} $\mathcal{S}$ on $X$ is a subset of $\mathcal{P}(X)$, the powerset of $X$, that satisfies the following conditions:
\begin{enumerate}[i)]
\item if $S_1\in \mathcal{S}$ and $S_1\subseteq S_2\subseteq X$ then $S_2\in \mathcal{S}$ (i.e.\ $\mathcal{S}$ is closed under taking supersets in $X$),
\item $\emptyset\not\in\mathcal{S},$ and
\item $X\in \mathcal{S}.$
\end{enumerate}
The \textit{order} of $\mathcal{S}$ is the cardinality of the ground set $X.$  We think of the elements of $X$ as components of a system that either operate or fail, hence we call the sets in $\mathcal{S}$ the \textit{operational states}.  Coherence is then the natural property that if we start with an operational state and make any number of failed components operational it can only improve matters (that is, will not result in a failed state).  Let $X$ have cardinality $n$ and suppose that each element of $X$ is independently operational with probability $p\in(0,1).$  The \textit{reliability} of coherent set system $\mathcal{S}$ on $X,$ denoted $\crel(\mathcal{S};p),$ is the probability that the set of operational elements of $X$ is in $\mathcal{S}$; that is,
\begin{align}
\crel(\mathcal{S};p) &= \sum_{S \in \mathcal{S}}p^{|S|}(1-p)^{n-|S|}\label{coherenceexpansion}\\
& = \sum_{i=0}^{n} N_{i}p^{i}(1-p)^{n-i}\label{coherenceNexpansion}
\end{align}
where $N_i$ is the number of operational states of order $i$ for each $i\in\{1,\hdots,n\}.$  There are obvious relevant coherent set systems underlying each of the network models introduced earlier, all on the edge set of the graph -- in general for $K$-terminal reliability, the operational states are those edge subsets that connect all vertices of $K.$  This collection of subsets is easily seen to be closed under taking supersets in the edge set, as adding edges cannot possibly disconnect an already connected graph.

Birnbaum, Esary, and Saunders achieved several significant results in \cite{S-shape} that describe the general shape of a coherent reliability polynomial (that is, the reliability of any coherent set system) on the interval $[0,1]$.   

\begin{itemize}
\item The reliability of any coherent set system $\mathcal{S}$ is strictly increasing on $(0,1)$ and satisfies $\crel(\mathcal{S};0)=0$ and $\crel(\mathcal{S};1)=1.$

\item The reliability of any coherent set system of order at least $2$ has at most one fixed point in $(0,1)$.

\item When written in the form of (\ref{coherenceNexpansion}), the reliability of any coherent set system $\mathcal{S}$ with $N_{1}=0$ and $N_{n-1}=n$ has exactly one fixed point $\hat{p}\in(0,1)$.  Further, $\crel(\mathcal{S};p)<p$ for $p\in(0,\hat{p})$ and $\crel(\mathcal{S};p)>p$ for $p\in(\hat{p},1).$
\end{itemize}

Note that the first result listed above implies that the all-terminal reliability of any graph with at least two vertices is strictly increasing on $(0,1)$ and passes through the points $(0,0)$ and $(1,1)$, and the second implies that the all-terminal reliability of any graph with at least two edges has at most one fixed point in $(0,1)$.  The conditions $N_1=0$ and $N_{n-1}=n$ of the third result listed above mean simply that the system fails whenever at most one component is operational, and that the system is operational whenever at most one component fails, respectively.  For all-terminal reliability, these conditions are satisfied if and only if the graph lies on at least $3$ vertices and is $2$-edge-connected (i.e.\ has no bridges), so this final result implies that the all-terminal reliability of any $2$-edge-connected graph on at least $3$ vertices is S-shaped.  As a final note, it is easily verified that the all-terminal reliability of of any graph on at least $3$ vertices that is not $2$-edge-connected lies below the identity function for $p\in(0,1).$  So the results of Birnbaum et al.\ give us a very good idea of the shape of any all-terminal reliability polynomial (or in general any coherent reliability polynomial) on the interval $[0,1].$

These findings led to the definition of an \textit{S-shaped} (or \textit{sigmoid shaped}) curve.  A typical S-shaped curve is shown in Figure \ref{SShapePic}.  In general, a function $f$ is called \textit{S-shaped} on $[0,1]$ if it satisfies:
\begin{itemize}
\item $f(0)=0$ and $f(1)=1$,
\item $f'(p)>0$ for $p\in(0,1)$,
\item $f(p)$ has a unique fixed point $\hat{p}\in(0,1)$, and
\item $f(p)<p$ for $p\in(0,\hat{p})$ and $f(p)>p$ for $p\in(\hat{p},1).$
\end{itemize}
With this new terminology, the final result of Birnbaum et al.\ listed above says that coherent reliability polynomials are S-shaped under fairly weak conditions.\\

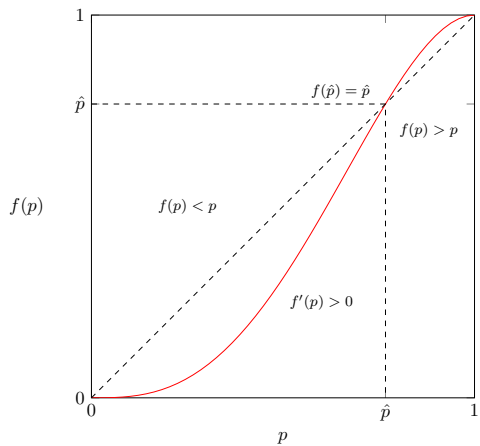
\begin{figure}
\begin{center}
\begin{tikzpicture}[scale=0.7]
\begin{axis}[
    axis equal image,
    scale only axis,
    x axis line style={-},
    y axis line style={-},
    xlabel={$p$},
    ylabel style={rotate=-90},
    ylabel={$f(p)$},
    ytick={0,0.7675918792,1},
    yticklabels={$0$, $\hat{p}$, $1$},
    xtick={0,0.7675918792, 1},
    xticklabels={$0$, $\hat{p}$, $1$},
    ymin=0,
    ymax=1,
    xmin=0,
    xmax=1,
    samples=50
]
    \draw[dashed] (axis cs:0,0.7675918792) -- (axis cs:0.7675918792,0.7675918792) -- (axis cs:0.7675918792,0);
   \addplot[domain=0:1,color=red] {x^4+4*x^3*(1-x)};
   \addplot[domain=0:1,dashed] {x};
   \node at (axis cs:0.25,0.5) {\footnotesize $f(p)<p$};
   \node at (axis cs:0.88,0.7) {\footnotesize $f(p)>p$};
   \node at (axis cs:0.65,0.8) {\footnotesize $f(\hat{p})=\hat{p}$};
   \node at (axis cs:0.6,0.25) {\footnotesize $f'(p)>0$};
\end{axis}
\end{tikzpicture}
\end{center}
\caption{A plot of an S-shaped function $f$.}
\label{SShapePic}
\end{figure}

Returning to the foundation of the network model, in some situations it is more realistic to assume that the edges are perfectly reliable and the nodes each operate independently with a given probability.  In particular, when the edges represent a wireless connection they may be nearly perfectly reliable.  We refer to this model as \textit{node reliability}, condensing the term \textit{residual node connectedness reliability} used in \cite{NodeRelComplexity1, Stivaros, NodeRelComplexity2}, for example.

\begin{definition}
Consider a network $G$ consisting of $n$ nodes each operating independently with probability $p\in[0,1]$.  The \textit{node reliability} of $G$, denoted $\rel(G;p)$, is the probability that at least one node is operational and that the operating nodes can all communicate in the induced subgraph that they generate.
\end{definition}

Node reliability can arise in a variety of applications. For example, (see  \cite{Stivaros}), a military may have many different missile sites, some with direct communication links between them (which we expect to be robust).  In the event that several missile sites are destroyed (each independently with probability $p$), we would like all of the remaining missile sites to be able to communicate with one another through the remaining network, so that targets for each surviving missile site can be chosen effectively.  As another example, consider a social network in which people are active (signed in, at work, etc.)\ independently with probability $p$.  The links in social networks (especially online social networks) are generally very reliable.  In case any urgent communication needs to occur through the network, we would like all of the active people to be able to communicate with one another.  

Like the other measures of reliability we have discussed, the node reliability of a graph is always a polynomial in $p$, as 
\begin{align} 
\rel(G;p) &= \sum_{C \in \mathcal{C}}p^{|C|}(1-p)^{n-|C|},\label{noderelexpansion}
\end{align}
where $\mathcal{C}$ is the collection of all vertex subsets that induce connected subgraphs of $G.$  We call these sets \textit{connected sets} and refer to $\mathcal{C}$ as the \textit{system of connected sets} of $G.$  As a simple example, the node reliability of the complete graph $K_n$ of order $n$ (i.e. the simple graph with $n$ nodes and all $\binom{n}{2}$ edges) is
\[
\rel(K_n;p)=1-(1-p)^{n},
\]
as any nonempty subset of vertices induces a connected subgraph (indeed, a complete subgraph).  The node reliability of $K_n$ is equivalent to the all-terminal reliability of a bundle of $n$ edges (the multigraph on two vertices with $n$ edges between them).

Much of the existing work on node reliability has concerned itself with finding optimal networks, should they exist, given constraints on the number of vertices and edges allowed (see \cite{NodeRel1,NodeRel2,Stivaros,NodeRel3}). Other research concerns the complexity of computing the polynomials; Sutner et al.\ showed in \cite{NodeRelComplexity2} that the problem of computing the node reliability polynomial is NP-hard, while Colbourn et al.\ \cite{NodeRelComplexity1} presented efficient algorithms for computing the node reliability polynomial of several restricted families of graphs.  Results on both of these problems for node reliability mirror those for $K$-terminal reliability to a large extent.

Despite the similarity of the formulations of node reliability and $K$-terminal reliability, and the similarity of discoveries on the two main problems (namely synthesis and computation issues), we demonstrate in this article that the shape of the node reliability polynomial on $(0,1)$ is very different from that of its coherent relatives.  In particular, we demonstrate the following key results:

\begin{itemize}
\item If a graph is sufficiently sparse, its node reliability polynomial has an interval of decrease in $(0,1)$.
\item The node reliability polynomial of any graph is concave down when $p$ is sufficiently close to $0.$
\item The node reliability polynomial of any tree of order at least $4$ has at least one inflection point in $(0,1).$
\item If a graph is sufficiently sparse and $2$-connected, its node reliability polynomial has at least two distinct inflection points in $(0,1).$
\item The node reliability polynomial of any sufficiently large graph with bounded maximum degree has at least two fixed points in $(0,1)$.
\end{itemize}

The first result listed above may come as a particular surprise.  The intuition is actually quite simple.  Note that the collection of connected sets of a graph is not a coherent set system in general; a connected set of a graph does not necessarily remain connected when we add vertices (this contrasts the situation for $K$-terminal reliability, where adding more edges cannot possibly disconnect an already connected graph).  In fact, it is easy to see that the connected set system of a graph $G$ is coherent if and only if $G$ is a complete graph (each singleton vertex set induces a connected graph, but any set made up of two non-adjacent vertices does not).  If we consider a sparse graph of order $n$, when $p$ is close to $\tfrac{1}{n}$, there is a fairly high probability that exactly one vertex is operating and thus a fairly high probability that the operational vertices induce a connected graph.  But as $p$ increases gradually from $\tfrac{1}{n}$, we are likely to have multiple operational nodes (but still not many), and since the graph is not dense it is unlikely that such a small set of nodes will induce a connected subgraph.  As an extreme example, if one considers disconnected graphs, we see that the node reliability of the complement of the complete graph of order $n$, $np(1-p)^{n-1}$, is increasing on $(0,1/n)$ and decreasing on $(1/n,1)$.  In the next section we prove that the node reliability of \textit{any} sufficiently sparse graph (connected or not) has an interval of decrease in $(0,1).$

\section{Monotonicity}\label{IncreaseDecrease}

It was proven in \cite{S-shape} that any coherent reliability polynomial is strictly increasing on $(0,1).$  We include our own short proof of this fact here as it is relevant to our work.  For any coherent set system $\mathcal{S}$ on a set $X$ of cardinality $n$, there is an associated set system $\mathcal{C_{S}}$ on $X$ given by
\[
\mathcal{C_S}=\{X-S\colon\ S\in \mathcal{S}\}.
\] 
The members of $\mathcal{C_S}$ are the sets of components whose failure does not render the graph inoperational.  Since $\mathcal{S}$ is coherent, the set system $\mathcal{C_S}$ is closed under \textit{containment}, making it a \textit{(simplicial) complex}.  We may write the reliability of the coherent system $\mathcal{S}$ in its \textit{$F$-form}
\[
\crel(\mathcal{S};p)=\sum_{k=0}^{n} F_k(1-p)^kp^{n-k},
\]
where $F_k$ is the number of sets of cardinality $k$ (in simplicial complex parlance, the number of \textit{faces} of cardinality $k$) in the complex $\mathcal{C_{S}}$. These coefficients satisfy \textit{Sperner's bounds} \cite{SpernerBound} for complexes:
\[
(k+1)F_{k+1}\leq (n-k)F_{k},
\]
for $k = 0,1,\ldots,n-1$. A straightforward computation yields
\begin{align}\label{Fform}
\crel'(\mathcal{S};p) = \sum_{k=0}^{n}\left[(n-k)F_k-(k+1)F_{k+1}\right](1-p)^kp^{n-k-1}.
\end{align}
Note that $F_0=1$ and $F_n=0$ follow immediately from the definition of coherent set system.  Let $t$ be the largest integer for which $F_t>0.$  The coefficient of the term corresponding to $k=t$ in (\ref{Fform}) is strictly positive as $F_{t}>1$ but $F_{t+1}=0,$ and the remaining coefficients are nonnegative by Sperner's bounds.  We conclude that the coherent reliability polynomial of any coherent set system is strictly increasing on $(0,1).$  As a corollary, the all-terminal reliability of a connected graph of order at least $2$ is strictly increasing.

While there might be an expectation that a similar result holds for node reliability, the issue of monotonicity is not so obvious for node reliability.  The coefficients of the $F$-form of the node reliability of a graph $G,$
\[
\rel(G;p)=\sum_{i=0}^n F_i(1-p)^ip^{n-i},
\]
fail to satisfy Sperner's bounds (the essential inequalities used above in the proof of monotonicity for coherent reliability polynomials) whenever $G$ is not complete, as then $F_{n-1}  = n$ and $F_{n-2} < n(n-1)/2$.  However, in spite of the failure of Sperner's bounds, there are non-complete graphs whose node reliability polynomials are always increasing on $(0,1).$  For example, consider the complete bipartite graph $K_{n,n}$ (the complete bipartite graph $K_{n_{1},n_{2}}$ consists of two cells of nodes, of sizes $n_{1}$ and $n_{2}$, respectively, with all $n_{1}n_{2}$ edges between the two cells).  The operational nodes induce a connected subgraph of $K_{n,n}$ if and only if at least one node from each bipartition set is operational, which occurs with probability $(1-(1-p)^n)^2$; or exactly one node is operational, which occurs with probability $2np(1-p)^{2n-1}$.  Thus, we have
\[ 
\rel(K_{n,n};p)=(1-(1-p)^{n})^2 + 2np(1-p)^{2n-1} = 1-2(1-p)^n+2np(1-p)^{2n-1}+(1-p)^{2n}.
\]
By a straightforward computation,
\[
\rel'(K_{n,n};p) =2n(1-p)^{n-1}\left[1-(2n-1)p(1-p)^{n-1}\right],
\]
and thus its sign is the same as that of $1-(2n-1)p(1-p)^{n-1}$ for all $p\in(0,1).$  Setting $f_n(p)=(2n-1)p(1-p)^{n-1}$, we see that $f_n'(p)=(2n-1)(1-p)^{n-2}[1-np]$, so that $p=\tfrac{1}{n}$ is the unique critical point of $f_n$ in $(0,1),$  and $f_n$ is maximized there.  We will demonstrate that
\[
f_n\left(\tfrac{1}{n}\right)=\tfrac{2n-1}{n}\left(1-\tfrac{1}{n}\right)^{n-1}<1
\] 
for all $n$.  Clearly $\tfrac{2n-1}{n}<2$, and $\left(1-\tfrac{1}{n}\right)^{n-1}<\tfrac{1}{2}$ as the sequence $a_n=\left(1-\tfrac{1}{n}\right)^{n-1}$ is decreasing for all $n\geq 2$ and $a_2=\tfrac{1}{2}.$  Hence we have
\[
f_n\left(\tfrac{1}{n}\right)<2\cdot \tfrac{1}{2}=1.
\] 
Therefore, $\rel'(K_{n,n};p)>0$ for all $p\in(0,1)$ for any $n\geq 2$, and we conclude that $\rel(K_{n,n};p)$ is increasing on $(0,1)$.

But we have already mentioned in the introduction that there are connected graphs whose node reliability polynomials are not always increasing on $(0,1)$ -- it does not even appear to be very rare for the node reliability polynomial to have an interval of decrease in $(0,1)$! Figure~\ref{path6noderel} shows a plot of the node reliability of a path of order $6$, and an interval of decrease between $0.2137$ and $0.5851$ is clearly evident. We have found that the node reliability polynomials of $37$ of the $112$ connected graphs of order $6$ have an interval of decrease, while the node reliability polynomials of $383$ of the $853$ connected graphs of order $7$ have an interval of decrease. 

\begin{figure}
\begin{center}
\begin{overpic}[scale=0.3]{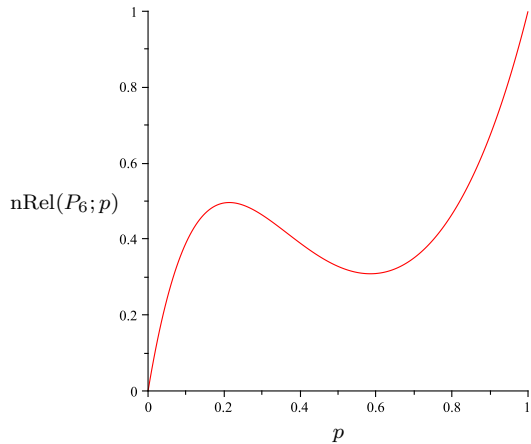}
\put(53,0){\footnotesize $p$}
\put(-20,52){\footnotesize $\rel(P_6;p)$}
\end{overpic}
\end{center}
\caption{Plot of the node reliability of a path of order $6$.}
\label{path6noderel}
\end{figure}

The expression for the node reliability given in (\ref{noderelexpansion}) gives rise to the convenient form
\begin{align}
\rel(G;p)=\sum_{k=1}^n c_kp^k(1-p)^{n-k},\label{cform}
\end{align}
where $c_k = c_k(G)$ is the number of connected sets of $G$ of order $k$ for each $k\in\{1,\hdots,n\}$. (Recall that a subset $C$ of vertices is called a connected set if and only if the induced subgraph $G[C]$ on $C$ is connected.)  We remark that while the problem of counting the number of connected sets in a graph has been studied in several different places in the literature  \cite{ConnectedSets,SubtreeMin,NodeRelComplexity2,SubtreeWang1, SubtreeWang2, SubtreeYan, SubtreeMinDeg, SubtreeDegreeSequence}, very little of this work distinguishes between connected sets of different orders, which node reliability inherently does.  

We refer to (\ref{cform}) as the \textit{$c$-form} of the node reliability, and we refer to the coefficients of the $c$-form collectively as the $c$-\textit{coefficients} of the node reliability polynomial.  The following straightforward observation giving the exact values of certain $c$-coefficients was made in \cite{Stivaros}.

\begin{observation}\label{ConnectedBasic}
Let $G$ be a connected graph of order $n$ and size $m$, let $\tau$ be the number of triangles of $G,$ and let $t$ be the number of cut vertices of $G.$ Then
\begin{enumerate}[\normalfont(i)]
\item $c_0=0$
\item $c_1=n,$
\item $c_2=m,$
\item $c_3=\left(\displaystyle\sum_{v\in V(G)}\tbinom{\mathrm{deg}(v)}{2}\right)-2\tau,$
\item $c_{n-1}=n-t,$ and
\item $c_n=1.$\hfill \qed
\end{enumerate}
\end{observation}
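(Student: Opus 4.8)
The plan is to verify each of the six items directly, by analysing which vertex subsets of the relevant cardinality induce connected subgraphs of $G$; the only item that needs a genuine counting argument is (iv).

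Items (i), (ii), (iii), and (vi) are essentially immediate from the definitions. A connected set must induce a connected graph, and under the convention implicit in the node reliability model (at least one operational node is required) the empty set is excluded, so $c_0=0$. Every single vertex induces a one-vertex graph, which is connected, so $c_1=n$. A two-element set $\{u,v\}$ induces a connected graph precisely when $uv\in E(G)$, so $c_2=m$. The unique $n$-element subset is $V(G)$ itself, which induces $G$; since $G$ is connected, $c_n=1$. For (v), I would observe that the $(n-1)$-element subsets of $V(G)$ are exactly the sets $V(G)\setminus\{v\}$ for $v\in V(G)$, and that such a set induces the graph $G-v$. This is connected if and only if $v$ is not a cut vertex, so $c_{n-1}$ equals the number of non-cut vertices of $G$, namely $n-t$.

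The substantive step is (iv). A three-element subset $\{a,b,c\}$ induces a connected subgraph if and only if that induced subgraph is a path $P_3$ (exactly two of the three possible edges present) or a triangle $K_3$ (all three present). Writing $P$ for the number of three-element subsets inducing a $P_3$ and recalling that $\tau$ counts the triangles, we have $c_3=P+\tau$. To evaluate $P$, I would count \emph{cherries} --- pairs consisting of a "center" vertex $v$ together with an unordered pair of distinct neighbours of $v$ --- in two ways. Summing over all choices of center, the number of cherries is $\sum_{v\in V(G)}\binom{\mathrm{deg}(v)}{2}$. On the other hand, every cherry $(v,\{x,y\})$ determines the three-element set $\{v,x,y\}$, which (since $vx,vy\in E(G)$) is connected; a set inducing a $P_3$ arises from exactly one cherry (centered at its degree-two vertex), while a set inducing a triangle arises from exactly three (one per vertex). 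Hence $\sum_{v\in V(G)}\binom{\mathrm{deg}(v)}{2}=P+3\tau$, and substituting $P=\sum_{v}\binom{\mathrm{deg}(v)}{2}-3\tau$ into $c_3=P+\tau$ gives $c_3=\left(\sum_{v\in V(G)}\binom{\mathrm{deg}(v)}{2}\right)-2\tau$.

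Since every step is elementary, there is no real obstacle here; the only point requiring care is the double counting in (iv) --- specifically, recognising that each triangle is counted three times by the cherry count while each induced $P_3$ is counted exactly once, so that triangles must be subtracted off with the correct multiplicity.
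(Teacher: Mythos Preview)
Your proof is correct in every detail, and in fact goes beyond what the paper provides: the paper states this result as an observation (attributed to \cite{Stivaros}) and gives no proof at all, simply appending a \qed. Your argument for (iv) via double-counting cherries is the natural one and would serve perfectly well as the missing justification.
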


We will not require explicit formulas for any of the other coefficients; we will simply bound them in terms of these known coefficients.  We prove upper bounds on the $c$-coefficients of the node reliability polynomial in terms of lower $c$-coefficients.  These bounds are similar in spirit to Sperner's bounds, which were used in the proof that any coherent reliability polynomial is increasing on $(0,1).$

\begin{lemma}\label{GeneralBound}
For any graph $G$, 
\[
2c_k\leq (n-k+1)c_{k-1}
\]
for all $k\in\{2,\hdots,n\}$.  More generally,
\[
(k-t+1)c_k\leq \binom{n-t}{k-t}c_t
\]
for all $k\in\{2,\hdots,n\}$ and $t\in\{1,\hdots,k-1\}.$
\end{lemma}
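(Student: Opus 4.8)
The plan is to prove both inequalities at once by a double-counting argument, since the first is exactly the case $t=k-1$ of the second (using $\binom{n-k+1}{1}=n-k+1$). So fix $k$ and $t$ with $1\le t<k\le n$, and let $N$ be the number of pairs $(D,C)$ in which $D$ is a connected set of $G$ of order $t$, $C$ is a connected set of $G$ of order $k$, and $D\subseteq C$. The upper bound on $N$ is routine: for a fixed connected set $D$ of order $t$, any connected set $C$ of order $k$ with $D\subseteq C$ is obtained by adjoining $k-t$ of the $n-t$ vertices outside $D$, so there are at most $\binom{n-t}{k-t}$ choices of $C$; summing over the $c_t$ choices of $D$ gives $N\le\binom{n-t}{k-t}c_t$.

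For the lower bound I would isolate the following claim: every connected graph $H$ on $k$ vertices contains at least $k-t+1$ connected sets of order $t$, for each $t$ with $1\le t\le k$. Granting this, apply it to the induced subgraph $G[C]$ for each connected set $C$ of $G$ of order $k$ --- noting that a connected set of $G[C]$ is also a connected set of $G$ --- to see that each such $C$ contains at least $k-t+1$ connected sets of order $t$; hence $N\ge(k-t+1)c_k$, and combining with the upper bound yields $(k-t+1)c_k\le\binom{n-t}{k-t}c_t$, which is the desired inequality (and, with $t=k-1$, the first inequality as well).

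To prove the claim I would induct on $k$. If $t=k$, the only connected set of order $t$ is $V(H)$, matching $k-t+1=1$; so assume $t<k$, whence $k\ge 2$. Since every connected graph on at least two vertices has a non-cut-vertex (indeed at least two, for instance the endpoints of a longest path), choose such a vertex $v$. Then $H-v$ is connected on $k-1\ge t$ vertices, and by the inductive hypothesis it contains at least $(k-1)-t+1=k-t$ connected sets of order $t$, none of which contains $v$. It remains to exhibit one further connected set of order $t$ that \emph{does} contain $v$: starting from $\{v\}$ and repeatedly adjoining a vertex of $H$ adjacent to the current set --- always possible while the current set is a proper subset of $V(H)$, since $H$ is connected --- produces a connected set of order $t$ containing $v$. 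These $k-t+1$ connected sets of order $t$ are distinct, which proves the claim.

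The main obstacle is the claim, and in particular recognizing that the existence of a non-cut-vertex is precisely what drives the induction, since it lets us delete a vertex of $H$ while remaining connected; everything else is bookkeeping. It is worth noting that the degenerate cases cause no difficulty: if $c_k=0$ the asserted inequality is trivial, and the base cases of the claim ($k=1$, or $t=k$) are immediate.
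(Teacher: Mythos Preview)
Your proof is correct and follows essentially the same approach as the paper: a double-counting of pairs (connected $t$-set, connected $k$-set containing it), combined with the auxiliary claim that any connected graph on $k$ vertices has at least $k-t+1$ connected sets of order $t$, proved by inducting via deletion of a non-cut-vertex. The only cosmetic difference is that the paper phrases the auxiliary claim as ``$c_k(G)\ge n-k+1$ for connected $G$ of order $n$'' and inducts on $n$, then specializes to $G[S]$; you state it directly for graphs on $k$ vertices, which is marginally cleaner but not substantively different.
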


\begin{proof}
Let $G$ be a connected graph on $n\geq 1$ vertices.  We first prove that for each $k\in\{1,\hdots,n\},$
\[
c_k(G)\geq n-k+1.
\]
We proceed by induction on $n.$  For the base case, when $n=1$ we have $c_1=1\geq 1-1+1,$ and the statement is verified.  Now suppose that for some $n\geq 2$, any connected graph $H$ of order $n-1$ satisfies $c_k(H)\geq n-k$.   Let $G$ be a connected graph of order $n.$  Let $v$ be a vertex whose removal does not disconnect $G.$  There are exactly $c_k(G-v)$ connected sets of $G$ of order $k$ that do not contain $v,$ and there must be at least one connected set of $G$ of order $k$ containing $v$  (of course we may successively add vertices to $\{v\}$ so that the set is connected at every step, as the graph is connected).  Thus we have
\[
c_k(G)\geq c_k(G-v)+1.
\]
Now by the induction hypothesis applied to $G-v,$
\[
c_k(G)\geq c_k(G-v)+1\geq n-k+1.
\]

Now we are ready to prove the statement of the Lemma.  For each $k\in\{1,\hdots, n\},$ let $C_k$ be the collection of connected sets of $G$ of order $k.$  For any $k\geq 2,$ consider a member $S$ of $C_k.$  The induced subgraph $G[S]$ contains at least $k-t+1$ connected sets of order $t$ by the argument in the previous paragraph.  Clearly, any connected set of $G[S]$ must also be a connected set of $G.$  Therefore, every member of $C_k$ can be written in the form $W\cup X$ for some $W\in C_{t}$ and $X\subseteq V(G)\backslash W$ in at least $k-t+1$ distinct ways.  The total number of pairs $(W,X)$ where $W\in C_{t}$ and $X\subseteq V(G)\backslash W$ is
\[
\binom{n-t}{k-t}c_{t}.
\]
Since each member of $C_k$ arises from at least $k-t+1$ of these pairs, we have
\[
(k-t+1)c_k\leq \binom{n-t}{k-t}c_t\qedhere
\]
\end{proof}

Now we are ready to prove the main result of this section.  

\begin{theorem}\label{Decrease}
If $G$ is a graph of order $n$ and size $m\leq 0.0851n^2,$ then $\rel(G;p)$ has an interval of decrease in $(0,1)$.
\end{theorem}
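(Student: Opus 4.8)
The plan is to produce a single point $p_0\in(0,1)$ at which $\rel'(G;p_0)<0$. Since $\rel(G;p)$ is a polynomial its derivative is continuous, so it is then negative on an open subinterval of $(0,1)$ containing $p_0$, on which $\rel(G;p)$ is strictly decreasing. Differentiating the $c$-form~(\ref{cform}) and collecting terms gives the identity
\[
\rel'(G;p)=\sum_{k=1}^{n}c_k\,(k-np)\,p^{k-1}(1-p)^{n-k-1},
\]
valid for all $p\in(0,1)$; the $k=n$ summand formally carries a factor $(1-p)^{-1}$ but, since $c_n=1$ and $n-np=n(1-p)$, it simplifies to $np^{n-1}$. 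Following the heuristic in the introduction, I would evaluate this at $p_0=\alpha/n$ for a well-chosen constant $\alpha$ slightly larger than $1$ and less than $2$ (the optimal choice turns out to be close to the golden ratio $\tfrac{1+\sqrt{5}}{2}$), so that $np_0=\alpha$. With this choice the $k=1$ summand is strictly negative and of order $n$, namely $-n(\alpha-1)(1-\alpha/n)^{n-2}$, and the sparsity hypothesis will be used to show the positive contributions of the terms $k\ge 2$ cannot overcome it.

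For the terms $k\ge 2$ I would keep the $k=2$ summand exactly, $m(2-\alpha)(\alpha/n)(1-\alpha/n)^{n-3}$, via Observation~\ref{ConnectedBasic}(iii), and control the terms $k\ge 3$ using Lemma~\ref{GeneralBound} with $t=2$, i.e.\ $c_k\le\tfrac{m}{k-1}\binom{n-2}{k-2}$, together with the elementary bound $\tfrac{k-\alpha}{k-1}\le 1$ (which holds for $k\ge 2$ since $\alpha>1$); note $(k-\alpha)\ge 0$ throughout because $\alpha<2$. The binomial series that appears sums in closed form, $\sum_{k=2}^{n}\binom{n-2}{k-2}p_0^{k-1}(1-p_0)^{n-k-1}=\tfrac{p_0}{1-p_0}$, and after subtracting off the $k=2$ piece and recombining with the exact $k=2$ summand one obtains
\[
\rel'(G;\alpha/n)\;\le\;\frac{m\alpha}{n-\alpha}\;-\;\frac{m\alpha(\alpha-1)}{n}\,(1-\alpha/n)^{n-3}\;-\;n(\alpha-1)(1-\alpha/n)^{n-2}.
\]
The coefficient of $m$ on the right is positive (it is at least $\tfrac{\alpha}{n-\alpha}-\tfrac{\alpha(\alpha-1)}{n}>0$ after bounding $(1-\alpha/n)^{n-3}\le1$), so it suffices to check the bound is negative when $m=0.0851\,n^2$; dividing by $n$ turns this into a one-variable inequality in $\alpha$ for each fixed $n$. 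Its limiting form as $n\to\infty$ reads $0.0851<\tfrac{(\alpha-1)e^{-\alpha}}{\alpha\,(1-(\alpha-1)e^{-\alpha})}$, and the right-hand side — maximized for $\alpha$ near the golden ratio — slightly exceeds $0.086$, so the inequality holds with a little room to spare. (Keeping the $k=2$ term exact rather than also bounding it by $1$ is essential here: the cruder bound only yields the threshold $\max_\alpha\tfrac{(\alpha-1)e^{-\alpha}}{\alpha}\approx 0.0758$.)

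The main obstacle is converting this asymptotic calculation into a statement uniform in $n$. One must replace the $e^{-\alpha}$ approximations by honest two-sided estimates of $(1-\alpha/n)^{n-2}$ and $(1-\alpha/n)^{n-3}$ — these can dip below $e^{-\alpha}$ for small $n$, precisely where the positive terms are proportionally largest — and verify that the chosen $\alpha$ makes the displayed bound negative for every $n\ge n_0$, where $n_0$ is a small absolute constant. The finitely many orders $n<n_0$ are then checked directly: for these the hypothesis forces $G$ to be extremely sparse (indeed edgeless when $n\le 3$, in which case $\rel(G;p)=np(1-p)^{n-1}$), so $\rel(G;p)$ is manifestly non-monotone on $(0,1)$. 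The constant $0.0851$ is simply a convenient rational value lying below the true threshold $\approx 0.086$ coming from the optimization over $\alpha$, chosen with enough slack to absorb these finite-$n$ corrections.
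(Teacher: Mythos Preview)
Your approach is essentially the paper's: evaluate $\rel'(G;p)$ at $p=\alpha/n$ with $1<\alpha<2$, keep the $k=1,2$ contributions exact, bound the tail $k\ge 3$ via Lemma~\ref{GeneralBound} with $t=2$, and optimize over $\alpha$. The paper uses the alternate derivative form~(\ref{RelDerivForm}) and bounds the tail slightly differently (first $kc_k-(n-k+1)c_{k-1}\le(k-2)c_k$, then $c_k\le\tfrac{m}{k-1}\binom{n-2}{k-2}$), arriving at the closed form $S\le m\bigl[p-\tfrac{1}{n-1}+\tfrac{(1-p)^{n-1}}{n-1}\bigr]$; the optimal parameter is $\hat r\approx 1.729$, which is where the constant $0.0851$ comes from. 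Your bound is marginally tighter asymptotically (threshold $\approx 0.0865$ near $\alpha\approx 1.65$), but otherwise the two arguments are the same in substance.

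One correction worth noting: your diagnosis of the finite-$n$ obstacle is backwards. The quantities $(1-\alpha/n)^{n-2}$ and $(1-\alpha/n)^{n-3}$ never dip below $e^{-\alpha}$; in fact $(1-\alpha/n)^{-(n-2)}$ is strictly increasing in $n$ with limit $e^{\alpha}$, so $(1-\alpha/n)^{n-2}>e^{-\alpha}$ for every $n>\alpha$. The paper proves exactly this monotonicity and uses it to replace $(1-r/n)^{-(n-2)}$ by $e^{r}$ uniformly, which is how it avoids any small-$n$ case-check. The genuine finite-$n$ pressure in your inequality comes instead from the positive term $\tfrac{m\alpha}{n-\alpha}$ versus its limit $\tfrac{m\alpha}{n}$; once you bound the exponential factors from below by $e^{-\alpha}$ (the correct direction), a short calculation like the paper's gives the result for all $n\ge 2$ in one stroke.
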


\begin{proof}
Let $G$ be as in the theorem statement.  A straightforward computation gives
\begin{align}\label{RelDerivForm}
\rel'(G;p)=\sum_{k=1}^np^{k-1}(1-p)^{n-k}\left[kc_k-(n-k+1)c_{k-1}\right],
\end{align}
where $c_k$ is the number of connected sets of $G$ of order $k$ for $k\in\{0,\hdots,n\}$ (recall that $c_0=0$).  We find directly using the facts that $c_1=n$ and $c_2=m$ from Observation \ref{ConnectedBasic} that the sum of the first two terms (corresponding to $k=1$ and $k=2$) of the sum in (\ref{RelDerivForm}) is given by: 
\begin{align*}
(1-p)^{n-2}\left[n(1-np)+2mp\right]
\end{align*}
We now bound the remaining terms in the sum from (\ref{RelDerivForm}) for any $p\in(0,1).$  For ease of reading we let
\begin{align}\label{sum1}
S=\sum_{k=3}^np^{k-1}\left(1-p\right)^{n-k}\left[kc_k-(n-k+1)c_{k-1}\right],
\end{align}
so that 
\begin{align}\label{Eval}
\rel'(G;p)=(1-p)^{n-2}\left[n(1-np)+2mp\right]+S.
\end{align}

We claim that $S\leq m\left[p-\tfrac{1}{n-1}+\tfrac{(1-p)^{n-1}}{n-1}\right]$.  We first use the fact that
\[
(n-k+1)c_{k-1}\geq 2c_k
\]
for all $k\in\{2,\hdots,n\}$ by Lemma \ref{GeneralBound}.  This means that
\[
kc_k-(n-k+1)c_{k-1}\leq (k-2)c_k,
\]
so that from (\ref{sum1}) we obtain
\begin{align}\label{sum2}
S\leq \sum_{k=3}^np^{k-1}\left(1-p\right)^{n-k}(k-2)c_k.
\end{align}
Using Lemma \ref{GeneralBound} again for $t=2,$ we have
\[
(k-1)c_k\leq \binom{n-2}{k-2}c_2=m\binom{n-2}{k-2}
\]
for all $k\in\{3,\hdots, n\}.$  From (\ref{sum2}), we obtain
\begin{align}\label{sum3}
S\leq m\cdot\sum_{k=3}^np^{k-1}\left(1-p\right)^{n-k}\left(\frac{k-2}{k-1}\right)\binom{n-2}{k-2}.
\end{align}
The sum in (\ref{sum3}) can be evaluated using binomial identities to obtain
\begin{align}
S\leq m\left[p-\tfrac{1}{n-1}+\tfrac{(1-p)^{n-1}}{n-1}\right], \label{BoundOnS}
\end{align}
as claimed.

Substituting the upper bound for $S$ from (\ref{BoundOnS}) into (\ref{Eval}), we get
\begin{align}
\rel'(G;p)&\leq \left(1-p\right)^{n-2}(n(1-np)+2mp)+m\left[p-\tfrac{1}{n-1}+\tfrac{(1-p)^{n-1}}{n-1}\right]\\
&=(1-p)^{n-2}\left[n(1-np)+m(2p+\tfrac{1-p}{n-1})\right]+m\left[p-\tfrac{1}{n-1}\right].\label{sum4}
\end{align}
We substitute $p=\tfrac{r}{n}$ into (\ref{sum4}) for some fixed $r \in (1,2)$; the exact value of $r$ will be determined shortly.
\begin{align*}
\rel'\left(G;\tfrac{r}{n}\right)&\leq\left(1-\tfrac{r}{n}\right)^{n-2}\left[n(1-r)+m\left(\tfrac{2r}{n}+\tfrac{n-r}{n(n-1)}\right)\right]+m\left[\tfrac{r}{n}-\tfrac{1}{n-1}\right]\\
&<\left(1-\tfrac{r}{n}\right)^{n-2}\left[n(1-r)+\tfrac{m}{n}\left(2r+1\right)\right]+\tfrac{m}{n}(r-1)\\
&=\left(1-\tfrac{r}{n}\right)^{n-2}\left[n(1-r)+\tfrac{m}{n}\left(2r+1\right)+\tfrac{m}{n}(r-1)\left(1-\tfrac{r}{n}\right)^{-(n-2)}\right]
\end{align*}
We now show that $\left( 1-\tfrac{r}{n}\right) ^{-(n-2)}<e^r$. We set $f(x) = \left( 1-\tfrac{r}{x}\right)^{-(x-2)}$ for $x \in (r,\infty)$. Now, using the fact that $-\ln(1-y) > y$ for $y \in (0,1)$, we see that
\begin{align*} 
f^{\prime}(x) & = \left( 1-\tfrac{r}{x}\right) ^{-(x-2)} \left( -\ln \left( 1 - \frac{r}{x} \right) + \frac{r(2-x)}{x^{2} \left( 1 - \frac{r}{x} \right)} \right)\\
 & > \left( 1-\tfrac{r}{x}\right) ^{-(x-2)} \left( \frac{r}{x} + \frac{r(2-x)}{x^{2} \left( 1 - \frac{r}{x} \right)} \right)\\
 & = \left( 1-\tfrac{r}{x}\right) ^{-(x-2)}\left(\frac{r(2-r)}{x(x-r)}\right)\\
 & > 0
\end{align*}
since $r<2$ and $x>r$. Thus $\left( 1-\tfrac{r}{n}\right) ^{-(n-2)}$ is increasing, and as $\displaystyle{\lim a_n=e^r}$, we have that $\left( 1-\tfrac{r}{n}\right) ^{-(n-2)} < e^{r}$.   
Thus, 
\[
\rel'(G;\tfrac{r}{n})<\left(1-\tfrac{r}{n}\right)^{n-2}\left[n(1-r)+\tfrac{m}{n}\left(2r+1+(r-1)e^r\right)\right].
\]
We find that if 
\begin{align}\label{LastIneq}
n(1-r)+\tfrac{m}{n}\left(2r+1+(r-1)e^r\right)\leq 0
\end{align}
then $\rel'(G;\tfrac{r}{n})<0$ and $\rel(G;p)$ is decreasing at $p=\tfrac{r}{n}.$  We rearrange (\ref{LastIneq}) to obtain the sufficient condition
\begin{align*}\label{Final}
m\leq \left[\tfrac{r-1}{2r+1+(r-1)e^r}\right]n^2.
\end{align*}
Using a computer algebra system, we determine that the function 
\[
f(r)=\tfrac{r-1}{2r+1+(r-1)e^r}
\]
reaches a maximum on $(1,\infty)$ of 
\[
f(\hat{r})\approx 0.08510464442
\] 
where $\hat{r}\approx 1.729474372$ (one can solve for $\hat{r}$ exactly in terms of the well known Lambert $W$ function).  We conclude that $\rel(G;p)$ has an interval of decrease in $(0,1).$
\end{proof}

One might ask how dense a graph needs to be to ensure that its node reliability polynomial is increasing on the entire interval $(0,1)$, and indeed, it must be very dense. Consider the graph formed from the complete graph $K_{n-1}$ by adding a single pendant edge.  Let us denote this graph by $K_{n-1} \circ K_{2}$ (the \textit{bonding} of $K_{n-1}$ and $K_{2}$ at a vertex).  Note that $K_{n-1}\circ K_2$ has $n$ vertices and only $n-2$ nonedges.  The reader can verify that
\begin{align*}
\rel(K_{n-1} \circ K_{2};p)=1-p(1-p)+p(1-p)^{n-1}-(1-p)^n
\end{align*}
for all $n\geq 2.$  We find that
\begin{align}\label{CompletePlusLeafDer}
\rel'\left(K_{n-1}\circ K_2;p\right)= 2p-1+(1-p)^{n-2}(n-2np+1), 
\end{align}
and evaluating at $p=\tfrac{2}{5}$ gives
\[
-\tfrac{1}{5}+\left(\tfrac{3}{5}\right)^{n-2}\left(\tfrac{1}{5}n+1\right).
\]
We set $g(x) = -\tfrac{1}{5}+\left(\tfrac{3}{5}\right)^{x-2}\left(\tfrac{1}{5}x+1\right)$ and find that
\[
g^{\prime}(x)=\left(\tfrac{3}{5}\right)^{x-2}\left[\ln\left(\tfrac{3}{5}\right)\left(\tfrac{1}{5}x+1\right)+\tfrac{1}{5}\right],
\]
which is negative for all $x\geq 0.$  Further, we find that $g(7)<0,$ so that $g(n)<0$ for all $n\geq 7.$  Since $g(n)=\rel'\left(K_{n-1}\circ K_2;\tfrac{2}{5}\right)$ for all integers $n\geq 2$ we conclude that $\rel(K_{n-1} \circ K_{2};p)$ is decreasing at $p =\tfrac{2}{5}$ for all $n \geq 7$.  Figure \ref{CompletePlusLeafPlot} shows a plot of $\rel(K_9\circ K_2;p)$ which has a clearly evident interval of decrease.

\begin{figure}
\centering
\begin{overpic}[scale=0.3]{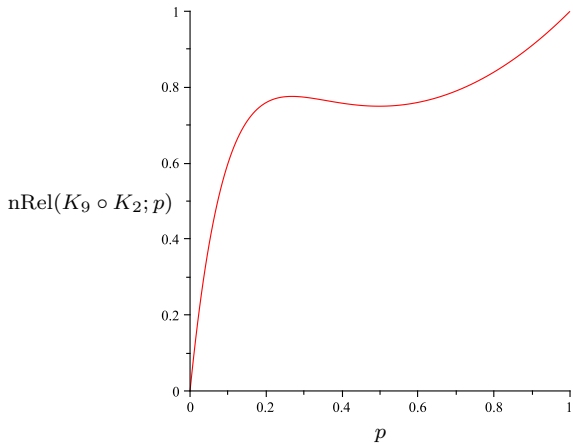}
\put(53,0){\footnotesize $p$}
\put(-30,52){\footnotesize $\rel(K_9\circ K_2;p)$}
\end{overpic}
\caption{Plot of the node reliability of $K_9\circ K_2$}
\label{CompletePlusLeafPlot}
\end{figure}

\section{Concavity and inflection points}\label{Concavity}

We now turn to the question of concavity and points of inflection.  Points of inflection of reliability polynomials are of interest as they indicate an important change in the behaviour of the graph.  The return on a small increase in the reliability of the individual components changes from decreasing to increasing, or the other way around.  This can give some indication as to whether or not it is worth the extra cost or effort involved to make each individual component a little bit more reliable.

We first summarize work done on the concavity and inflection points of all-terminal reliability polynomials.  In \cite{BrownRel}, Brown, Ko\c{c} and Kooij proved that the all-terminal reliability of almost every simple graph has an inflection point in $(0,1)$. The arguments there can be extended easily to show that for any coherent set system $\mathcal{S}$ with $N_1 = 0$ and $N_{n-1} = n,$ the coherent reliability polynomial $\crel(\mathcal{S};p)$ is concave up for $p>0$ sufficiently close to $0$ and concave down for $p<1$ sufficiently close to $1$.  Hence, under these weak conditions, a coherent reliability polynomial has at least one point of inflection in $(0,1)$.  In \cite{Graves}, Graves demonstrated that coherent reliability polynomials can have two inflection points in $(0,1)$.  Later, several families of all-terminal reliability polynomials having two inflection points in $(0,1)$ were presented in \cite{BrownRel}.  Finally, the fact that all-terminal reliability polynomials can have arbitrarily many inflection points in the interval $(0,1)$ was proven for multigraphs \cite{ManyInflectionPoints}.  For all-terminal reliability, or more generally for coherent reliability, the families which are known to have more than one point of inflection in $(0,1)$ are rather thin -- very few examples of any particular order $n$ are known.

What is the case for node reliability?  It is not difficult to see that for any $n\geq 2$ the complete graph on $n$ vertices is concave down on the entire interval $(0,1),$ as 
\[
\rel(K_n;p)=1-(1-p)^n,
\]
so that
\[
\rel''(K_n;p)=-n(n-1)(1-p)^{n-2}<0
\]
for $n\geq 2$ and $p\in (0,1).$  We conclude that the node reliability polynomial of a complete graph has no inflection points in the interval $(0,1).$  The remainder of this section concerns finding node reliability polynomials with one or more inflection point.

By a straightforward computation, the second derivative of the node reliability of $G$ is given by 
\begin{align}
\rel''(G;p)=\sum_{k=1}^{n-1}d_kp^{k-1}(1-p)^{n-k-1}, \label{RelSecondDeriv}
\end{align}
with
\begin{align}
d_k=(k+1)kc_{k+1}-2k(n-k)c_k +(n-k+1)(n-k)c_{k-1}, \label{dcoeff}
\end{align}
where $c_k$ is the number of connected sets of $G$ on $k$ vertices for $k\in\{1,\hdots,n\}.$  We use this notation for $d_k$ throughout the remainder of this article.

\begin{lemma}\label{ConcaveDown}
Let $G$ be a graph on $n\geq 2$ vertices.  The node reliability of $G$ is concave down for $p$ sufficiently close to $0$.
\end{lemma}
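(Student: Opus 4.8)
The plan is to read the sign of $\rel''(G;p)$ near $p=0$ directly off the expansion (\ref{RelSecondDeriv}). For $n\geq 2$ the exponent $n-k-1$ is nonnegative for every $k\in\{1,\dots,n-1\}$, so each summand $d_kp^{k-1}(1-p)^{n-k-1}$ is an honest polynomial with no pole at $0$; in particular $\rel''(G;p)$ is continuous on $[0,1]$. As $p\to 0^{+}$ every term with $k\geq 2$ vanishes because of the factor $p^{k-1}$, while the $k=1$ term tends to $d_1\cdot 0^{0}\cdot 1^{\,n-2}=d_1$. Hence $\lim_{p\to 0^{+}}\rel''(G;p)=d_1$, and it suffices to prove the strict inequality $d_1<0$: continuity of $\rel''(G;p)$ then yields an interval $(0,\varepsilon)$ on which it is negative, i.e.\ on which $\rel(G;p)$ is concave down.

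To evaluate $d_1$, set $k=1$ in (\ref{dcoeff}) and use $c_0=0$ and $c_1=n$ (the first two parts of Observation \ref{ConnectedBasic}, which in any case are trivial): $d_1=(1+1)(1)c_2-2(1)(n-1)c_1+(n)(n-1)c_0=2c_2-2n(n-1)$. Since the number of connected $2$-sets of a simple graph on $n$ vertices satisfies $c_2\leq\binom{n}{2}=\tfrac12 n(n-1)$, we get $d_1\leq n(n-1)-2n(n-1)=-n(n-1)<0$ for every $n\geq 2$. (For $n=2$ one can also observe directly that $\rel''(G;p)=d_1$ is a negative constant.) This completes the argument.

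There is essentially no real obstacle here; the only points deserving a moment's care are (i) checking that the exponents in (\ref{RelSecondDeriv}) are nonnegative, so that "letting $p\to 0$" is legitimate and $d_1$ really is the limit rather than a leading term near a singularity, and (ii) making sure the inequality on $d_1$ is strict, which it is because a (multi)graph on $n$ vertices has at most $\binom{n}{2}$ adjacent pairs, strictly fewer than $n(n-1)$. If one later wanted a quantitative version — an explicit $\varepsilon$, or the location of the first inflection point — one could instead bound the tail $\sum_{k=2}^{n-1}d_kp^{k-1}(1-p)^{n-k-1}$ crudely, using Lemma \ref{GeneralBound} to control each $d_k$ in terms of $c_2=m$, and then solve the resulting inequality for $p$; but since the statement only claims existence, the limit argument suffices.
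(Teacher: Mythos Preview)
Your proof is correct and follows essentially the same route as the paper: both evaluate $\lim_{p\to 0^{+}}\rel''(G;p)=d_1$ from (\ref{RelSecondDeriv}), compute $d_1=2c_2-2n(n-1)$ via Observation~\ref{ConnectedBasic}, and bound $c_2\leq\binom{n}{2}$ to obtain $d_1\leq -n(n-1)<0$. Your extra care about nonnegativity of the exponents and continuity at $0$ is a welcome clarification, but there is no substantive difference in method.
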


\begin{proof}
Consider $\rel''(G;p)$ in the form given in (\ref{RelSecondDeriv}).  Notice that for $k\geq 2$ we have
\[
\lim_{p\rightarrow 0}d_kp^{k-1}(1-p)^{n-k-1}=0,
\]
while the term corresponding to $k=1$ in (\ref{RelSecondDeriv}) has limit
\[
\lim_{p\rightarrow 0}d_1(1-p)^{n-2}=d_1.
\]
Thus when $p$ is sufficiently close to $0$, the sign of $\rel''(G;p)$ will be the same as the sign of $d_1$ (as long as this value is nonzero).  From (\ref{dcoeff}), $d_1$ is given by
\[
d_1=2c_2-2(n-1)c_1+n(n-1)c_0,
\]
By Observation \ref{ConnectedBasic}, we have
\[
d_1=2m-2n(n-1)+0\leq 2\binom{n}{2}-2n(n-1)=-n(n-1).
\]
Therefore, $\rel''(G;p)<0$ and thus $\rel(G;p)$ is concave down for $p$ sufficiently close to $0$.
\end{proof}

Lemma \ref{ConcaveDown} demonstrates another major difference between the shape of the node reliability polynomial and the shape of the all-terminal reliability polynomial -- while the node reliability of any graph on $n\geq 2$ vertices is concave down near $p=0$ by Lemma \ref{ConcaveDown}, the all-terminal reliabiliy of any graph on $n\geq 3$ vertices is concave up near $p=0$ (see \cite{BrownRel}, Proposition 2).

At the opposite end of the interval $[0,1],$ when $p$ is close to $1$, the node reliability may be concave up or concave down.  In the next theorem we prove that the node reliability of any tree is concave up near $p=1$ in order to reach the conclusion that it has at least one inflection point in $(0,1).$  This is again very different from the case for $K$-terminal reliability (including two-terminal and all-terminal reliability); the $K$-terminal reliability of a tree $T$ is equal to $p^k$ (with $k$ being the number of edges in a minimum subtree containing all vertices of $K$), and hence has no inflection points in $(0,1).$

\begin{theorem}\label{TreeInflection}
Let $T$ be a tree on $n\geq 4$ vertices.  The node reliability polynomial of $T$ has at least one point of inflection in $(0,1).$
\end{theorem}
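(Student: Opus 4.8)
The plan is to show that $\rel(T;p)$ is concave \emph{up} for $p$ sufficiently close to $1$. Combined with Lemma~\ref{ConcaveDown}, which gives that $\rel(T;p)$ is concave down for $p$ near $0$, this means $\rel''(T;p)$ takes values of both signs on $(0,1)$; being a nonzero polynomial, it therefore has a root of odd multiplicity in $(0,1)$, and at such a point the concavity of $\rel(T;p)$ genuinely changes, yielding a point of inflection.

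To understand $\rel''(T;p)$ near $p=1$ I would use the form (\ref{RelSecondDeriv})--(\ref{dcoeff}): every term $d_kp^{k-1}(1-p)^{n-k-1}$ with $k\le n-2$ carries a factor of $(1-p)$, so $\lim_{p\to 1}\rel''(T;p)=d_{n-1}$, and it suffices to show $d_{n-1}>0$ (continuity then gives $\rel''(T;p)>0$ for $p$ near $1$). By (\ref{dcoeff}), $d_{n-1}=n(n-1)c_n-2(n-1)c_{n-1}+2c_{n-2}$, and for a tree Observation~\ref{ConnectedBasic} supplies $c_n=1$ and $c_{n-1}=n-t=\ell$, where $\ell$ is the number of leaves (the cut vertices of a tree being exactly its non-leaves). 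The one new ingredient needed is a formula for $c_{n-2}$, which I would get from the edge-counting criterion for connectivity: for $S\subseteq V(T)$ with $|S|<n$, the forest $T-S$ is a tree iff it has exactly $n-|S|-1$ edges, i.e.\ iff $\sum_{v\in S}\deg(v)-e(S)=|S|$, where $e(S)$ counts edges inside $S$. For $|S|=2$ this forces $S$ to be either a pair of (necessarily non-adjacent) leaves or an edge joining a leaf to a vertex of degree $2$, so $c_{n-2}(T)=\binom{\ell}{2}+r$, where $r$ is the number of edges of $T$ joining a leaf to a degree-$2$ vertex.

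Substituting these values and simplifying, writing $i=n-\ell$ for the number of internal vertices, collapses the expression to $d_{n-1}=i(i-1)+2r$. This is $\ge 0$, with equality iff $i=1$ and $r=0$; for $n\ge 4$ that occurs only when $T=K_{1,n-1}$ is a star (its unique internal vertex then has degree $n-1\ge 3$, forcing $r=0$). Hence for every non-star tree on $n\ge 4$ vertices we have $d_{n-1}=i(i-1)+2r\ge 2>0$, so $\rel(T;p)$ is concave up near $p=1$ and the first paragraph finishes the argument. The star is handled directly from the closed form $\rel(K_{1,n-1};p)=p+(n-1)p(1-p)^{n-1}$: differentiating twice gives $\rel''(K_{1,n-1};p)=(n-1)^2(1-p)^{n-3}(np-2)$, which is positive on $(2/n,1)$ when $n\ge 4$, so $\rel(K_{1,n-1};p)$ is again concave up near $p=1$.

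The only real work is establishing the formula $c_{n-2}(T)=\binom{\ell}{2}+r$ and the ensuing algebraic reduction of $d_{n-1}$ to $i(i-1)+2r$; I expect the edge-counting characterization of when $T-S$ stays connected to be the crux (elementary, but the place where a mistake would hide), while everything else reduces to Lemma~\ref{ConcaveDown}, Observation~\ref{ConnectedBasic}, and the one-line star computation.
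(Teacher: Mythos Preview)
Your proposal is correct and follows essentially the same approach as the paper: split off the star, and for every other tree show $d_{n-1}>0$ via the values $c_n=1$, $c_{n-1}=\ell$, and $c_{n-2}=\binom{\ell}{2}+r$, then invoke Lemma~\ref{ConcaveDown}. Your algebraic reduction $d_{n-1}=i(i-1)+2r$ with $i=n-\ell$ is in fact tidier than the paper's manipulation (which bounds the same quantity by rewriting $\ell(2n-\ell-1)=n(n-1)-(n-\ell)(n-1-\ell)$ without naming the result), and your remark about $\rel''$ having a root of odd multiplicity is a welcome extra bit of care.
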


\begin{proof}
First suppose that $T\cong K_{1,n-1}$ for some $n\geq 4.$  We find that
\[
\rel(K_{1,n-1};p)=p+(n-1)p(1-p)^{n-1},
\]
so straightforward computation gives
\[
\rel''(K_{1,n-1};p)=(n-1)^2(np-2)(1-p)^{n-3}.
\]
A simple analysis demonstrates that $\rel(K_{1,n-1};p)$ is concave down on $\left(0,\tfrac{2}{n}\right)$ and concave up on $\left(\tfrac{2}{n},1\right),$ so that the intended conclusion holds.

Now let $T$ be a tree on $n\geq 4$ vertices that is not isomorphic to $K_{1,n-1}.$  Consider $\rel''(T;p)$ in the form given in (\ref{RelSecondDeriv}).  Note that 
\[
\lim_{p\rightarrow 1}\rel''(T;p)=d_{n-1},
\]
as all terms apart from the $k=n-1$ term approach $0$ as $p\rightarrow 1$.  Thus, when $p$ is sufficiently close to $1$, the sign of $\rel''(T;p)$ will be the same as the sign of $d_{n-1}$ (as long as $d_{n-1}$ is nonzero).  From (\ref{dcoeff}),
\begin{align}\label{LeadingCoeff1}
d_{n-1}=n(n-1)c_n-2(n-1)c_{n-1}+2c_{n-2}.
\end{align}
By Observation \ref{ConnectedBasic}, $c_n=1$ and $c_{n-1}=n-t$, where $t$ is the number of cut vertices of $T$.  Since $T$ is a tree, we can write $n-t=r$ where $r$ is the number of leaves of $T,$ so that $c_{n-1}=r.$    Further, $c_{n-2}=\binom{r}{2}+s,$ where $s$ is the number of leaves adjacent to a vertex of degree $2$ in $G$, as the connected sets of order $n-2$ consist of either all vertices but a pair of leaves or all vertices but a leaf and an adjacent vertex of degree $2$.  Substituting these values into (\ref{LeadingCoeff1}), we obtain
\begin{align*}
d_{n-1}&=n(n-1)-2(n-1)r+2\left[\binom{r}{2}+s\right]\\
&=n(n-1)-2(n-1)r+r(r-1)+2s\\
&=2s+n(n-1)-r(2n-r-1).
\end{align*}
Since $T\not\cong K_{1,n-1},$ we have $r<n-1$ so that
\[
r(2n-r-1)=n(n-1)-(n-r)(n-1-r)<n(n-1),
\]
and finally
\[
d_{n-1}=2s+n(n-1)-r(2n-r-1)>2s.
\]
We conclude that $\rel''(T;p)$ is positive for $p$ sufficiently close to $1$, and therefore that $\rel(T;p)$ is concave up for $p$ sufficiently close to $1$.  Recall from Lemma \ref{ConcaveDown} that $\rel(T;p)$ is concave down for $p$ sufficiently close to $0.$  We conclude that $\rel(T;p)$ has at least one inflection point in $(0,1).$
\end{proof}

In the proof of Theorem \ref{TreeInflection} we saw that the node reliability polynomial of the star $K_{1,n-1}$ has exactly one inflection point in $(0,1)$ for any $n\geq 4,$ and we conjecture that all trees of order at least $4$ have exactly one inflection point in $(0,1).$  While many of the `S-shaped' all-terminal reliability polynomials also appear to have a single point of inflection in $(0,1),$ the node reliability of any tree on $n\geq 4$ vertices appears to have an `N-shape' on $(0,1)$ as opposed to the `S-shape' of the all-terminal reliability polynomials.  Figure \ref{TreeShape} provides a plot showing the node reliability polynomials of all trees on $7$ vertices.

\begin{figure}
\begin{center}
\begin{overpic}[scale=0.3]{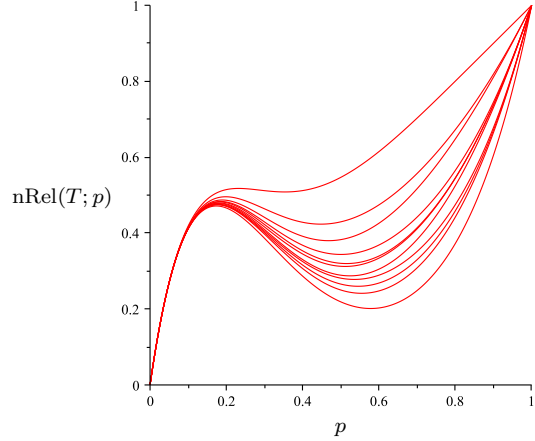}
\put(53,0){\footnotesize $p$}
\put(-20,52){\footnotesize $\rel(T;p)$}
\end{overpic}
\end{center}
\caption{Node reliability polynomials of all trees on $7$ vertices.}
\label{TreeShape}
\end{figure}

We next present a family of graphs whose node reliability polynomials each have at least two inflection points in $(0,1).$  Unlike the examples for coherent and all-terminal reliability polynomials, our family provides numerous examples of each order $n.$  We will require the following lemma.

\begin{lemma}\label{ConcaveDownNear1}
Let $G$ be a $2$-connected graph.  The node reliability of $G$ is concave down for $p\in (0,1)$ sufficiently close to $1$.
\end{lemma}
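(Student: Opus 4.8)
The plan is to mimic the strategy used in the proof of Theorem \ref{TreeInflection}: examine the sign of $\rel''(G;p)$ as $p\rightarrow 1$ by reading off the top coefficient of the expansion (\ref{RelSecondDeriv}). As there, all terms in (\ref{RelSecondDeriv}) except the $k=n-1$ term vanish as $p\rightarrow 1$, so $\lim_{p\rightarrow 1}\rel''(G;p)=d_{n-1}$, and it suffices to show $d_{n-1}<0$ whenever $G$ is $2$-connected. From (\ref{dcoeff}),
\[
d_{n-1}=n(n-1)c_n-2(n-1)c_{n-1}+2c_{n-2},
\]
so the task reduces to controlling the three highest $c$-coefficients $c_n$, $c_{n-1}$, $c_{n-2}$ for a $2$-connected graph.

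The first two are pinned down exactly by Observation \ref{ConnectedBasic}: $c_n=1$, and since $G$ is $2$-connected it has no cut vertices, so $t=0$ and $c_{n-1}=n$. Substituting these gives $d_{n-1}=n(n-1)-2(n-1)n+2c_{n-2}=2c_{n-2}-n(n-1)$, so the entire question becomes whether $c_{n-2}<\binom{n}{2}$ for every $2$-connected graph. Now $c_{n-2}$ counts the pairs of vertices $\{u,v\}$ whose removal leaves $G$ connected, i.e.\ the complement of the set of $2$-vertex cuts together with the pairs $\{u,v\}$ with $v$ a cut vertex of $G-u$ — but cleanly, $c_{n-2}=\binom{n}{2}-(\text{number of }2\text{-element vertex subsets whose deletion disconnects }G)$. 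So I would show that a $2$-connected graph always has at least one such "bad" pair; in fact a $2$-connected graph on $n\geq 3$ vertices that is not complete has a $2$-cut, and the complete graph $K_n$ has deletion pairs that disconnect only trivially — wait, $K_n$ minus two vertices is $K_{n-2}$, which is connected. So the genuinely delicate case is $G=K_n$: there $c_{n-2}=\binom{n}{2}$ and $d_{n-1}=0$, and the lemma would be false. Hence the statement must implicitly exclude $K_n$, or (more likely, given the phrasing of Lemma~\ref{ConcaveDown} and the surrounding discussion) the authors intend $G$ to be $2$-connected and \emph{non-complete}; I would proceed under that reading, or alternatively note that for $K_n$ the conclusion "concave down near $1$" still holds since $\rel(K_n;p)=1-(1-p)^n$ is concave down on all of $(0,1)$, handling it as a separate trivial case.

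With that case distinction in place, the argument finishes quickly: for non-complete $2$-connected $G$, pick any two nonadjacent vertices $u,v$; if $G-\{u,v\}$ is disconnected we have a bad pair, and if not, I would instead argue directly that some pair must be bad using the fact that a non-complete graph on $n$ vertices with all pair-deletions connected would be $3$-connected-like in a way that forces too many edges — more simply, use a minimum-size $2$-cut $\{x,y\}$, which exists because $G$ is not complete, and observe $\{x,y\}$ is a bad pair, giving $c_{n-2}\leq\binom{n}{2}-1$, hence $d_{n-1}\leq-2<0$. Then $\rel''(G;p)<0$ for $p$ sufficiently close to $1$, which is the claim. The only real obstacle is the bookkeeping around $K_n$ (and perhaps small cases $n=3,4$), so I would state the lemma carefully for non-complete graphs and dispatch $K_n$ with the one-line observation above; everything else is a direct substitution into (\ref{dcoeff}) using Observation~\ref{ConnectedBasic}.
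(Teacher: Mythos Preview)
Your approach has a genuine gap: the claim that ``a minimum-size $2$-cut $\{x,y\}$ exists because $G$ is not complete'' is false.  Any $3$-connected non-complete graph (e.g.\ $K_{3,3}$, the Petersen graph, or the $3$-cube) has no $2$-element vertex cut at all, so $c_{n-2}=\binom{n}{2}$ and hence $d_{n-1}=0$.  Your reduction to the sign of $d_{n-1}$ therefore gives no information for any graph of connectivity $\ell\geq 3$; the difficulty is not isolated to $K_n$.

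The paper's proof fixes exactly this by working with the vertex connectivity $\ell$ rather than assuming $\ell=2$.  Since $c_k=\binom{n}{k}$ for all $k>n-\ell$ while $c_{n-\ell}<\binom{n}{\ell}$, one computes that $d_k=0$ for every $k>n-\ell+1$ and that $d_{n-\ell+1}<0$.  Factoring $(1-p)^{\ell-2}$ out of (\ref{RelSecondDeriv}), the remaining sum tends to $d_{n-\ell+1}<0$ as $p\to 1^-$, which gives the concavity.  Your argument is essentially the special case $\ell=2$ of this; to repair it you must track which $d_k$ is the \emph{first} nonzero one from the top, and that index depends on the connectivity, not merely on whether $G$ is complete.
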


\begin{proof}
Again we consider $\rel''(G;p)$ in the form given in (\ref{RelSecondDeriv}).  Let $\ell$ be the order of a smallest vertex cut-set in $G$ (note that $\ell\geq 2$ as $G$ is $2$-connected by assumption).  We must have
\[
c_k=\binom{n}{k} \ \mbox{for all} \ k> n-\ell
\]
and
\[
c_{n-\ell}< \binom{n}{\ell}.
\]
Substituting these values into (\ref{dcoeff}), for any $k>n-\ell+1$ we have
\begin{align*}
d_k&=(k+1)k\binom{n}{k+1}-2k(n-k)\binom{n}{k}+(n-k+1)(n-k)\binom{n}{k-1}\\
&=\frac{n!}{(k-1)!(n-k-1)!}-2\frac{n!}{(k-1)!(n-k-1)!}+\frac{n!}{(k-1)!(n-k-1)!}\\
&=0,
\end{align*}
while for $k=n-\ell+1$ we have
\begin{align*}
d_{n-\ell+1}&=(n-\ell+2)(n-\ell+1)c_{n-\ell+2}\\
& \hspace{1cm} -2(n-\ell+1)(\ell-1)c_{n-\ell+1}+\ell(\ell-1)c_{n-\ell}\\
&<(n-\ell+2)(n-\ell+1)\binom{n}{n-\ell+2}\\
& \hspace{1cm} -2(n-\ell+1)(\ell-1)\binom{n}{n-\ell+1}+\ell(\ell-1)\binom{n}{n-\ell}\\
&=\frac{n!}{(n-\ell)!(\ell-2)!}-2\frac{n!}{(n-\ell)!(\ell-2)!}+\frac{n!}{(n-\ell)!(\ell-2)!}\\
&=0.
\end{align*}
So $d_k=0$ for $k>n-\ell+1$ and $d_{n-\ell+1}<0.$  Substituting $d_k=0$ for $k>n-\ell+1$ into (\ref{RelSecondDeriv}), we obtain
\begin{align*}
\rel''(G;p)&=\sum_{k=1}^{n-\ell+1}d_kp^{k-1}(1-p)^{n-k-1}\\
&=(1-p)^{\ell-2}\sum_{k=1}^{n-\ell+1}d_kp^{k-1}(1-p)^{n-k-\ell+1}
\end{align*}
Clearly we have $\displaystyle\lim_{p\rightarrow 1^-}\rel''(G;p)=0.$  However, we can determine the sign of $\rel''(G;p)$ for $p\in(0,1)$ sufficiently close to $1$ by considering the sign of the sum
\[
\sum_{k=1}^{n-\ell+1}d_kp^{k-1}(1-p)^{n-k-\ell+1}.
\] 
We have
\[
\lim_{p\rightarrow 1^-}\sum_{k=1}^{n-\ell+1}d_kp^{k-1}(1-p)^{n-k-\ell+1}=d_{n-\ell+1},
\]
as all terms in the sum apart from the $k=n-\ell+1$ term have a positive power of $(1-p).$  Since $d_{n-\ell+1}<0$, we conclude that for $p\in(0,1)$ sufficiently close to $1$  we have $\rel''(G;p)<0$.
\end{proof}

Now we are ready to show that there are graphs whose node reliability polynomials have two or more inflection points in $(0,1).$

\begin{theorem}\label{TwoPOI}
Let $G$ be a graph of order $n$ and size $m$.  If $m\leq 0.0851n^2$ and $G$ is $2$-connected then $\rel(G;p)$ has at least two distinct points of inflection in $(0,1).$
\end{theorem}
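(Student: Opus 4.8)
The plan is to play three earlier results off one another. Since $m\le 0.0851n^2$, Theorem~\ref{Decrease} tells us that $\rel(G;p)$ has an interval of decrease in $(0,1)$, so in particular there is a point $p_0\in(0,1)$ with $\rel'(G;p_0)<0$. Since $G$ is $2$-connected, Lemma~\ref{ConcaveDownNear1} tells us that $\rel''(G;p)<0$ for $p$ in some interval $(1-\delta_1,1)$, while Lemma~\ref{ConcaveDown} tells us that $\rel''(G;p)<0$ for $p$ in some interval $(0,\delta_0)$ regardless. The picture is that $\rel''$ is negative at both ends of $(0,1)$, but the dip in the graph of $\rel$ between $p_0$ and $p=1$, where $\rel$ must climb back to the value $\rel(G;1)=1$, will force $\rel''$ to be positive somewhere in between; a continuous function that is negative, then positive, then negative must change sign at least twice.

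First I would pin down a point of positive second derivative. Because $G$ is connected, $\rel(G;1)=1$, while $\rel(G;p_0)<1$ (the empty vertex set is never a connected set). Applying the Mean Value Theorem to $\rel(G;\cdot)$ on $[p_0,1]$ yields a point $p_2\in(p_0,1)$ with $\rel'(G;p_2)>0$. Applying the Mean Value Theorem again, now to $\rel'(G;\cdot)$ on $[p_0,p_2]$, yields a point $p'\in(p_0,p_2)\subseteq(0,1)$ with
\[
\rel''(G;p')\;=\;\frac{\rel'(G;p_2)-\rel'(G;p_0)}{p_2-p_0}\;>\;0,
\]
since the numerator is a positive number minus a negative number.

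Next I would run the sign-change count. The function $\rel''(G;\cdot)$ is a polynomial that is not identically zero (it is negative near $0$ by Lemma~\ref{ConcaveDown}), so it has only finitely many zeros in $(0,1)$; these zeros split $(0,1)$ into finitely many open subintervals, on each of which $\rel''$ has constant sign. This sign is $-$ on the leftmost subinterval (by Lemma~\ref{ConcaveDown}), $-$ on the rightmost subinterval (by Lemma~\ref{ConcaveDownNear1}), and $+$ on the intermediate subinterval containing $p'$. Hence, reading the signs from left to right, $\rel''$ changes sign at least once from $-$ to $+$ and, later, at least once from $+$ to $-$; these are two distinct zeros of $\rel''$ at which it genuinely changes sign, i.e., two distinct points of inflection of $\rel(G;\cdot)$ in $(0,1)$.

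The two Mean Value Theorem steps and the invocations of the three prior results are routine; the one place requiring a little care is the final sign-change argument, where one must rule out the possibility that $\rel''$ only touches the axis without crossing it, and passing to the finite zero set of the polynomial $\rel''$ together with its constancy of sign between consecutive zeros dispatches this cleanly. It is worth emphasising where $2$-connectedness enters: only through Lemma~\ref{ConcaveDownNear1}, which supplies concavity near $p=1$. Without that hypothesis the argument would merely furnish the point $p'$ with $\rel''(G;p')>0$ and hence only one inflection point (compare Theorem~\ref{TreeInflection} for trees, whose node reliability is concave up near $1$), so $2$-connectedness is precisely the ingredient that doubles the count.
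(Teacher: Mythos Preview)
Your proof is correct and follows essentially the same approach as the paper's: both arguments use Theorem~\ref{Decrease} to locate a point of negative first derivative, the Mean Value Theorem (together with $\rel(G;1)=1$) to locate a point of positive second derivative to its right, and then Lemmas~\ref{ConcaveDown} and~\ref{ConcaveDownNear1} to force two sign changes of $\rel''$. Your treatment is, if anything, slightly more careful than the paper's in distinguishing genuine sign-changing zeros of $\rel''$ from mere zeros, and in applying the Mean Value Theorem explicitly at each step.
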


\begin{proof}
By Theorem \ref{Decrease}, $\rel(G;p)$ contains an interval of decrease in $(0,1)$.  In fact, $\rel'\left(G;\tfrac{\hat{r}}{n}\right)<0,$ where $\hat{r}\approx 1.729474372$, as discussed in the proof of Theorem \ref{Decrease}.  Since $\rel(G;p)<1$ for all $p\in (0,1)$ and $\rel(G;1)=1$, $\rel(G;p)$ must be increasing on some neighbourhood $(\overline{p},1).$  Let $\hat{p}\in(\overline{p},1)$ so that $\hat{p}>\tfrac{\hat{r}}{n}$ and $\rel'\left(G;\hat{p}\right)>0.$  By the Mean Value Theorem, there is some point $c\in\left(\tfrac{\hat{r}}{n},\hat{p}\right)$ such that
\[
\rel''(G;c)=\frac{\rel'(G;\hat{p})-\rel'\left(G;\tfrac{\hat{r}}{n}\right)}{\hat{p}-\tfrac{\hat{r}}{n}}>0.
\]
Therefore, $\rel(G;p)$ is concave up at some point $c$ inside the interval.  By Lemmas \ref{ConcaveDown} and \ref{ConcaveDownNear1}, $\rel(G;p)$ is concave down for all $p\in(0,1)$ sufficiently close to $0$ and concave down for all $p\in(0,1)$ sufficiently close to $1$, so we conclude that $\rel(G;p)$ has at least two points of inflection in $(0,1).$
\end{proof}

Thus it is not so rare for node reliabiliy polynomials to have two or more points of inflection in $(0,1).$  By comparison, the families of graphs presented in \cite{BrownRel, ManyInflectionPoints} whose all-terminal reliability polynomials have two or more points of inflection in $(0,1)$ contain far fewer graphs of each order $n.$

\section{Fixed points}\label{FixedPoints}

A key result proven in \cite{S-shape} is that the reliability polynomial of any coherent set system of order at least $2$ has at most one fixed point in $(0,1).$  As a corollary, the all-terminal reliability of any connected graph with at least $2$ edges has at most one fixed point in $(0,1),$ and the two-terminal reliability of any connected graph has at most one fixed point in $(0,1)$ as long as the target nodes are non-adjacent.  The fixed point $\hat{p}$ for the two-terminal reliability of a network plays a particularly interesting role when we \textit{iterate} the network structure (that is, when we replace each edge $e$ in the network with a copy of the network itself, identifying the target nodes with the endpoints of $e$).  The two-terminal reliability of this iterated network is found by composing the two-terminal reliability polynomial with itself.  If we do this repeatedly, then for $p>\hat{p}$, the two-terminal reliability of the iterated structure tends to $1$, while for $p<\hat{p},$ the reliability tends to $0$.  This is essentially due to the S-shape of coherent reliability polynomials (see \cite{S-shape} for details).

Considering fixed points of node reliability, we note that there are node reliability polynomials with no fixed points in $(0,1)$ (for $n \geq 2$, $\rel(K_{1,n-1};p)=p + (n-1)p(1-p)^{n-1} > p$), node reliability polynomials with exactly one fixed point in $(0,1)$ (based on calculations for all graphs of small order it appears that the node reliability of any tree not isomorphic to a star has exactly one fixed point in this interval), and of course, exactly one node reliability polynomial (for the graph $K_1$) with all $p\in(0,1)$ being fixed points.  Surprisingly, there are many node reliability polynomials with two or more distinct fixed points in $(0,1),$ which again contrasts with what is known for coherent reliability polynomials.  We will prove that the node reliability of any sufficiently large $2$-connected graph of bounded degree has at least two fixed points in $(0,1).$  We will require the following lemma.

\begin{lemma}\label{RelDeriv}
If $G$ is a connected graph on $n$ vertices with $t$ cut vertices, then
\[
\rel'(G;0)=n
\]
and
\[
\rel'(G;1)=t
\]
\end{lemma}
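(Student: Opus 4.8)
The plan is to read both boundary values straight off a closed form for $\rel'(G;p)$, using only the explicitly known $c$-coefficients from Observation~\ref{ConnectedBasic}. The convenient starting point is equation~(\ref{RelDerivForm}),
\[
\rel'(G;p)=\sum_{k=1}^n p^{k-1}(1-p)^{n-k}\bigl[kc_k-(n-k+1)c_{k-1}\bigr],
\]
with the convention $c_0=0$; one could equally differentiate the $c$-form (\ref{cform}) term by term, and the same cancellations occur.

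For $\rel'(G;0)$, the point is that in the $k$-th summand the factor $p^{k-1}$ vanishes at $p=0$ for every $k\geq 2$, so (taking $n\geq 2$, so that the $k=1$ summand genuinely carries the surviving factor $(1-p)^{n-1}$) only the $k=1$ term contributes, and it evaluates to $1\cdot\bigl[1\cdot c_1-n\cdot c_0\bigr]=c_1=n$ by parts (i) and (ii) of Observation~\ref{ConnectedBasic}. For $\rel'(G;1)$, symmetrically, in the $k$-th summand the factor $(1-p)^{n-k}$ vanishes at $p=1$ for every $k\leq n-1$, so only the $k=n$ term survives, and it evaluates to $1\cdot\bigl[n\,c_n-1\cdot c_{n-1}\bigr]=n\cdot 1-(n-t)=t$ by parts (v) and (vi) of Observation~\ref{ConnectedBasic}, where $t$ denotes the number of cut vertices of $G$.

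The only thing to be careful about is the degenerate case $n=1$: there $G=K_1$, $\rel(G;p)=p$, and $t=0$, so $\rel'(G;1)=1\neq t$ (though $\rel'(G;0)=1=n$ still holds); the statement is meant for $n\geq 2$, and in that range the index ranges above are non-degenerate and the argument goes through cleanly. I do not anticipate any real obstacle here — the entire content is in recognizing that exactly one summand survives at each endpoint and then quoting the relevant parts of Observation~\ref{ConnectedBasic}; it may be worth a one-line sanity check against a small example such as $P_3$, where $\rel(P_3;p)=3p-4p^2+2p^3$ gives $\rel'(P_3;0)=3$ and $\rel'(P_3;1)=1$, matching $n=3$ and $t=1$.
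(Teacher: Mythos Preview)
Your proposal is correct and takes essentially the same approach as the paper: both substitute $p=0$ and $p=1$ directly into equation~(\ref{RelDerivForm}), observe that only the $k=1$ (respectively $k=n$) term survives, and then invoke Observation~\ref{ConnectedBasic} to identify $c_1=n$, $c_0=0$, $c_n=1$, and $c_{n-1}=n-t$. The paper does not discuss the $n=1$ degenerate case or include a sanity check, but these are minor embellishments rather than a different route.
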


\begin{proof}
Substituting into the expression for $\rel'(G;p)$ given in (\ref{RelDerivForm}) and using Observation \ref{ConnectedBasic} yields
\[
\rel'(G;0)=c_1=n
\] 
and 
\[
\rel'(G;1)=nc_n-c_{n-1}=n-(n-t)=t.\qedhere
\]
\end{proof}

The following result follows almost immediately from Lemma \ref{RelDeriv}.

\begin{corollary}
Let $G$ be a graph on $n$ vertices having $t\geq 2$ cut vertices.  Then $\rel(G;p)$ has at least one fixed point in $(0,1).$
\end{corollary}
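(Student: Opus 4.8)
The plan is to study the auxiliary polynomial $h(p) = \rel(G;p) - p$ on $[0,1]$ and to locate a zero of $h$ strictly inside $(0,1)$ by the Intermediate Value Theorem, using the endpoint derivative values furnished by Lemma~\ref{RelDeriv}. Since $G$ carries cut vertices it is connected, so the $c$-form~(\ref{cform}) gives $\rel(G;0)=0$ and $\rel(G;1)=1$; hence $h(0)=h(1)=0$, and a fixed point of $\rel(G;p)$ in $(0,1)$ is exactly a zero of $h$ there.

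I would first analyse $h$ near $p=0$. By Lemma~\ref{RelDeriv} we have $\rel'(G;0)=n$, and since a connected graph with at least two cut vertices has $n\geq 4$, this gives $h'(0)=n-1>0$. As $h$ is a polynomial, $h'$ is continuous, so $h'>0$ on some interval $[0,\delta)$; together with $h(0)=0$ this forces $h(p)>0$ for $p\in(0,\delta)$. I would then analyse $h$ near $p=1$. Again by Lemma~\ref{RelDeriv}, $\rel'(G;1)=t$, so $h'(1)=t-1\geq 1>0$ using the hypothesis $t\geq 2$; the same continuity argument gives $h'>0$ on some interval $(1-\varepsilon,1]$, and with $h(1)=0$ this forces $h(p)<0$ for $p\in(1-\varepsilon,1)$.

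Finally I would choose $p_1\in(0,\delta)$ and $p_2\in(1-\varepsilon,1)$, note that $p_1<p_2$, that $h(p_1)>0$ and $h(p_2)<0$, and apply the Intermediate Value Theorem to the continuous function $h$ on $[p_1,p_2]$ to obtain a point $\hat{p}\in(p_1,p_2)\subseteq(0,1)$ with $h(\hat{p})=0$, i.e.\ $\rel(G;\hat{p})=\hat{p}$. There is essentially no obstacle in this argument; the one point worth flagging is that the hypothesis $t\geq 2$ (rather than merely $t\geq 1$) is exactly what is needed to make $h'(1)>0$, forcing $h$ to dip below zero just to the left of $p=1$ — with a single cut vertex one would instead have $h'(1)=0$ and would need a finer, higher-order analysis near $p=1$.
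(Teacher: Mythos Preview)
Your proof is correct and follows essentially the same approach as the paper: both arguments use Lemma~\ref{RelDeriv} to obtain $\rel'(G;0)=n$ and $\rel'(G;1)=t\geq 2$, deduce that $\rel(G;p)-p$ is positive just to the right of $0$ and negative just to the left of $1$, and then invoke the Intermediate Value Theorem. Your version is slightly more explicit in justifying that $G$ is connected (hence $\rel(G;0)=0$ and $\rel(G;1)=1$) and in isolating why the hypothesis $t\geq 2$ rather than $t\geq 1$ is needed, but the substance is the same.
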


\begin{proof}
By Lemma \ref{RelDeriv},
\[
\rel'(G;0)=n\geq 2 \ \ \ \ \mbox{and} \ \ \ \ \rel'(G;1)=t\geq 2.
\]
Therefore $\rel(G;p)>p$ on some interval $(0,\varepsilon_0)$ and $\rel(G;p)<p$ on some interval $(1-\varepsilon_1,1).$  By the Intermediate Value Theorem, $\rel(G;p)=p$ for some $p\in(\varepsilon_0,1-\varepsilon_1).$
\end{proof}

We conjecture that the node reliability polynomial of any graph with at least $2$ cut vertices has exactly one fixed point in $(0,1).$  We now prove the main result of this section which demonstrates that there are infinitely many graphs whose node reliability polynomials each have at least two fixed points in $(0,1).$

\begin{theorem}
Fix $\Delta\geq 2.$  Let $G$ be a $2$-connected graph on $n$ vertices with maximum degree $\Delta.$  For $n$ sufficiently large, $\rel(G;p)$ has at least two fixed points in $(0,1).$
\end{theorem}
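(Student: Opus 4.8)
The plan is to exhibit two sign changes of the continuous function $h(p)=\rel(G;p)-p$ on $(0,1)$. I would first record the endpoint behaviour. From the $c$-form (\ref{cform}) we have $\rel(G;0)=0$ and $\rel(G;1)=c_n=1$, so $h(0)=h(1)=0$. Since $G$ is $2$-connected it has $t=0$ cut vertices, so Lemma~\ref{RelDeriv} gives $\rel'(G;0)=n$ and $\rel'(G;1)=0$; hence $h'(1)=-1<0$ while $h(1)=0$, which forces $h(p)>0$ on some interval $(1-\delta,1)$, i.e.\ $\rel(G;p)>p$ just below $1$. At the left end I would bound $\rel(G;p)$ below by its first term: $\rel(G;\tfrac1n)\ge c_1\cdot\tfrac1n(1-\tfrac1n)^{n-1}=(1-\tfrac1n)^{n-1}$, which tends to $e^{-1}$ and so exceeds $\tfrac1n$ once $n$ is large; thus $\rel(G;\tfrac1n)>\tfrac1n$.

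The heart of the argument is to produce one intermediate point $q_\Delta\in(0,1)$, depending only on $\Delta$, at which $\rel(G;q_\Delta)<q_\Delta$ for all large $n$; this is where the bounded-degree hypothesis is used. I would first bound $c_k$, the number of connected sets of order $k$: any connected set $S$ of order $k$ containing a fixed vertex $v$ is the vertex set of a closed walk of length $2(k-1)$ from $v$ (follow a depth-first traversal of a spanning tree of $G[S]$), and there are at most $\Delta^{2(k-1)}\le\Delta^{2k}$ such walks, so summing over $v$ and dividing by $k$ gives $c_k\le n\Delta^{2k}$. Now choose any $q$ with $\Delta^2 q/(1-q)\le\tfrac12$, e.g.\ $q_\Delta=\tfrac{1}{2\Delta^2+1}$; then
\[
\rel(G;q_\Delta)=\sum_{k=1}^n c_k q_\Delta^{\,k}(1-q_\Delta)^{n-k}\le n(1-q_\Delta)^n\sum_{k\ge1}\Bigl(\tfrac{\Delta^2 q_\Delta}{1-q_\Delta}\Bigr)^k\le n(1-q_\Delta)^n\longrightarrow 0,
\]
so $\rel(G;q_\Delta)<q_\Delta$ for all sufficiently large $n$.

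Finally I would assemble these facts: take $n$ large enough that $\tfrac1n<q_\Delta$, that $\rel(G;\tfrac1n)>\tfrac1n$, and that $\rel(G;q_\Delta)<q_\Delta$. Then $h(\tfrac1n)>0>h(q_\Delta)$, so the Intermediate Value Theorem gives a fixed point $\hat p_1\in(\tfrac1n,q_\Delta)$; and $h(q_\Delta)<0$ while $h(p)>0$ for $p$ near $1$, so the Intermediate Value Theorem gives a second fixed point $\hat p_2\in(q_\Delta,1)$. Since $\hat p_1<q_\Delta<\hat p_2$ these are two distinct fixed points in $(0,1)$.

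The step I expect to be the main obstacle is the uniform-in-$n$ upper bound on $\rel(G;q_\Delta)$: one must pick the constant $q_\Delta$ correctly relative to the growth of the $c_k$, and it is precisely the exponential-in-$k$ bound $c_k\le n\Delta^{2k}$ — valid only because the degree is bounded — that makes the geometric series converge and the whole quantity vanish as $n\to\infty$. Everything else is the endpoint analysis already packaged in Lemma~\ref{RelDeriv} together with two applications of the Intermediate Value Theorem.
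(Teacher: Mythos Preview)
Your proof is correct and follows the same three–point Intermediate Value Theorem strategy as the paper: show $\rel(G;p)>p$ just above $0$ and just below $1$ (both via Lemma~\ref{RelDeriv} and the $2$-connectivity hypothesis), then exhibit a single intermediate point $q_\Delta$, depending only on $\Delta$, at which $\rel(G;q_\Delta)<q_\Delta$ for all large $n$.

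The genuine difference is in how the intermediate estimate is obtained. The paper takes $q=\tfrac{1}{\Delta^2}$ and bounds the tail $\sum_{k\ge3}c_kp^k(1-p)^{n-k}$ by first bounding $c_3\le n\binom{\Delta}{2}$ via Observation~\ref{ConnectedBasic} and then feeding this into the Sperner-type inequality of Lemma~\ref{GeneralBound} to get $c_k\le\frac{n}{n-2}\binom{\Delta}{2}\binom{n-2}{k-2}$; the resulting sum is evaluated exactly using the binomial theorem. You instead bound $c_k$ by the self-contained walk-counting argument $c_k\le n\Delta^{2k}$, choose $q_\Delta=\tfrac{1}{2\Delta^2+1}$ so that $\Delta^2q_\Delta/(1-q_\Delta)=\tfrac12$, and sum a geometric series. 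Your route bypasses Lemma~\ref{GeneralBound} entirely and is arguably more elementary and portable, at the cost of a cruder bound on $c_k$ (exponential in $k$ rather than polynomial) and hence a slightly smaller test point; the paper's route reuses the machinery already developed in Section~\ref{IncreaseDecrease} and yields a sharper inequality at the larger point $\tfrac{1}{\Delta^2}$. Either way the conclusion is the same, and your identification of the bounded-degree hypothesis as precisely what makes the uniform-in-$n$ estimate work is exactly right.
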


\begin{proof}
By Lemma \ref{RelDeriv},
\[
\rel'(G;0)=n\geq 2 \ \ \ \ \mbox{and} \ \ \ \ \rel'(G;1)=t=0.
\]
Therefore $\rel(G;p)>p$ on some interval $(0,\varepsilon_0)$ and $\rel(G;p)>p$ on some interval $(1-\varepsilon_1,1).$  Now it is sufficient to prove that $\rel(G;p)<p$ for some $p\in(0,1),$ as the conclusion will follow from the Intermediate Value Theorem.  

We claim that $\rel\left(G;\tfrac{1}{\Delta^2}\right)<\tfrac{1}{\Delta^2}$ for $n$ sufficiently large.  The node reliability polynomial of $G$ is given by
\[
\rel(G;p)=np(1-p)^{n-1}+mp^2(1-p)^{n-2}+\sum_{k=3}^nc_kp^k(1-p)^k,
\]
where $c_k$ is the number of connected sets of $G$ of order $k$ for each $k\in\{3,\hdots,n\}.$

By Observation \ref{ConnectedBasic},
\[
c_3\leq n\binom{\Delta}{2}
\]
and so by Lemma \ref{GeneralBound},
\[
c_k\leq \frac{c_3}{k-2}\binom{n-3}{k-3}\leq \frac{n}{n-2}\binom{\Delta}{2}\binom{n-2}{k-2}
\]
for each $k\geq 3.$  Thus we have for $p \in (0,1)$ that
\begin{align*}
\sum_{k=3}^nc_kp^k(1-p)^k&\leq \sum_{k=3}^n\frac{n}{n-2}\binom{\Delta}{2}\binom{n-2}{k-2}p^k(1-p)^{n-k}\\
&=\frac{n}{n-2}\binom{\Delta}{2}\sum_{k=3}^n\binom{n-2}{k-2}p^k(1-p)^{n-k}\\
&=\frac{n}{n-2}\binom{\Delta}{2}p^2\sum_{k=1}^{n-2}\binom{n-2}{k}p^k(1-p)^{n-k-2}\\
&=\frac{n}{n-2}\binom{\Delta}{2}p^2\left[1-(1-p)^{n-2}\right]\\
&<\frac{n}{n-2}\binom{\Delta}{2}p^{2}.
\end{align*}
Using this bound on $\displaystyle\sum_{k=3}^nc_kp^k(1-p)^k$ and the elementary bound $m\leq \frac{n\Delta}{2},$ we have
\begin{align*}
\rel(G;p)&<np(1-p)^{n-1}+\frac{n\Delta}{2}p^2(1-p)^{n-2}+\frac{n}{n-2}\binom{\Delta}{2}p^2\\
&=np(1-p)^{n-2}\left[(1-p)+\frac{\Delta}{2}p\right]+\frac{n}{n-2}\binom{\Delta}{2}p^2.
\end{align*}
Therefore, 
\begin{align*}
\rel\left(G;\tfrac{1}{\Delta^2}\right)&<\tfrac{n}{\Delta^2}\left(1-\tfrac{1}{\Delta^2}\right)^{n-2}\left(1-\tfrac{1}{\Delta^2}+\tfrac{1}{2\Delta}\right)+\tfrac{n}{n-2}\tbinom{\Delta}{2}\tfrac{1}{\Delta^4}\\
&=\tfrac{n}{\Delta^2}\left(1-\tfrac{1}{\Delta^2}\right)^{n-2}\left(1-\tfrac{1}{\Delta^2}+\tfrac{1}{2\Delta}\right)+\tfrac{1}{2}\left(\tfrac{n}{n-2}\right)\left(\tfrac{\Delta-1}{\Delta}\right)\tfrac{1}{\Delta^2}
\end{align*}
For $n\geq 2\Delta$ we have
\[
\left(\tfrac{n}{n-2}\right)\left(\tfrac{\Delta-1}{\Delta}\right)\leq 1,
\] 
which implies
\[
\rel\left(G;\tfrac{1}{\Delta^2}\right)<n\left(1-\tfrac{1}{\Delta^2}\right)^{n-2}\left(1-\tfrac{1}{\Delta^2}+\tfrac{1}{2\Delta}\right)\tfrac{1}{\Delta^2}+\tfrac{1}{2\Delta^2}
\]
for $n\geq 2\Delta.$  It is clear that for $n$ sufficiently large we will have
\begin{align}\label{DeltaInequality}
n\left(1-\tfrac{1}{\Delta^2}\right)^{n-2}\left(1-\tfrac{1}{\Delta^2}+\tfrac{1}{2\Delta}\right)\leq\tfrac{1}{2}
\end{align}
as 
\[
\lim_{n\rightarrow\infty}n\left(1-\tfrac{1}{\Delta^2}\right)^{n-2}=0.
\]
We conclude that for $n$ sufficiently large,
\[
\rel\left(G;\tfrac{1}{\Delta^2}\right)<\tfrac{1}{2\Delta^2}+\tfrac{1}{2\Delta^2}=\tfrac{1}{\Delta^2}
\]
as claimed, and so the node reliability polynomial of $G$ has at least two fixed points in $(0,1)$ for $n$ sufficiently large (we note that the inequality (\ref{DeltaInequality}) can be solved exactly in terms of the well known Lambert W function in order to determine just how large $n$ must be in terms of $\Delta$).
\end{proof}

\section{Conclusion}

What is striking about node reliability is that on the surface its definition is analogous to that of other well known forms of reliability (such as all-terminal, two-terminal and $K$-terminal), but its shape and analytic properties are very different in general. The frequent lack of monotonicity, the contrasting concavity near $0$, the frequency of points of inflection, and the multiplicity of fixed points all illustrate that node reliability is quite different from the other models of probabilistic robustness on graphs (or even coherent systems), and merits further attention. 

We have found many graphs of small order whose node reliabilities each have three points of inflection in $(0,1).$  The graph shown in Figure \ref{3InflectionFig} is the unique graph on at most $7$ vertices satisfying this property.  We have found that the node reliabilities of $84$ of the $11117$ nonisomorphic connected graphs on $8$ vertices have three points of inflection in $(0,1).$  We note that all of the small graphs that we have found whose node reliabilities have three points of inflection have exactly one leaf and exactly one cut vertex.  Two questions arise:  Are there infinitely many graphs whose node reliabilities have $3$ inflection points in $(0,1)$?  Can the node reliability have arbitrarily many inflection points in $(0,1)$?

\begin{figure}[h]
\centering
\begin{tikzpicture}
\vertex (1) at (0,0) {};
\vertex (2) at (0,1) {};
\vertex (3) at (0,-1) {};
\vertex (4) at (-2,0) {};
\vertex (5) at (1,0.5) {};
\vertex (6) at (1,-0.5) {};
\vertex (7) at (-1,0) {};
\path
(1) edge (5)
(1) edge (6)
(1) edge (7)
(2) edge (5)
(2) edge (7)
(3) edge (6)
(3) edge (7)
(4) edge (7)
(5) edge (6)
  ;
\end{tikzpicture}
\caption{The unique graph of order at most $7$ whose node reliability has three points of inflection in $(0,1).$}\label{3InflectionFig}
\end{figure}
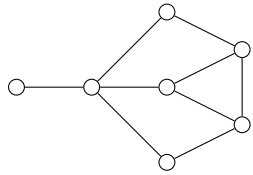

We conclude with yet another glaring difference between node reliability and all-terminal reliability. The all-terminal reliability polynomial of a disconnected graph is always identically zero, but the situation is not so trivial for the node reliability polynomial.  If $G$ is not connected, then
\[
\rel(G;0)=0 \ \ \ \ \mbox{and} \ \ \ \ \rel(G;1)=0,
\]
but $\rel(G;p)>0$ for all $p\in(0,1)$.  We have found many examples of disconnected graphs whose node reliability polynomials have two distinct maximal intervals of decrease in $(0,1)$ -- the plots of such functions are indeed `M-shaped', rather than the previously described `S-shaped' or `N-shaped'.  For example, the graph formed from the disjoint union of a single vertex and the star $K_{1,n-1}$ satisfies this property for all $n\geq 12.$  A plot of $\rel(K_{1,19}\cup K_1;p)$ is shown in Figure \ref{StarUnionVertex}. We ask: is there a connected graph whose node reliability has more than one maximal interval of decrease in $(0,1)$?

\begin{figure}[h!]
\begin{center}
\begin{overpic}[scale=0.3]{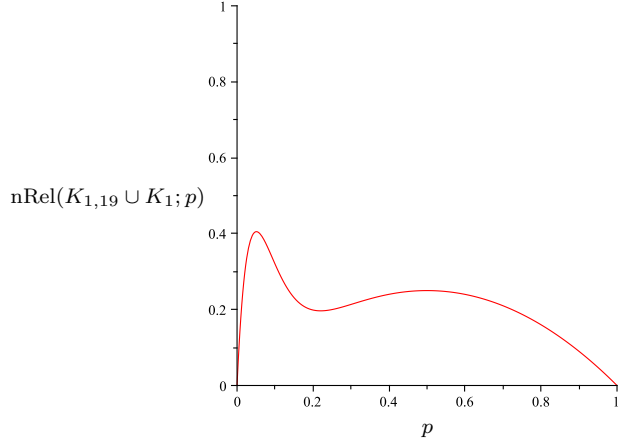}
\put(53,0){\footnotesize $p$}
\put(-40,52){\footnotesize $\rel(K_{1,19}\cup K_1;p)$}
\end{overpic}
\end{center}
\caption{The node reliability of the graph $K_{1,19}\cup K_1$.}
\label{StarUnionVertex}
\end{figure}

For those graphs whose node reliability polynomials have an interval of decrease in $(0,1),$ a natural question to ask is how long the interval of decrease can be.  For any $n\geq 2,$ the node reliability polynomial of the empty graph $\overline{K}_n$ on $n$ nodes (the complement of the complete graph $K_n$) is given by $\rel(\overline{K}_n;p)=np(1-p)^{n-1}$ which can easily be seen to be decreasing on the interval $(\tfrac{1}{n},1).$  This means that the interval of decrease can have length arbitrarily close to $1$ for disconnected graphs, but for connected graphs we conjecture that the length is at most $\tfrac{1}{2}.$  We can demonstrate that the length of the interval of decrease can be arbitrarily close to $\tfrac{1}{2}$ for connected graphs, and we give a brief sketch of this result.  Let $f_n$ be the node reliability polynomial of $K_{n-1}\circ K_2$ for each $n\geq 2.$  From the expression for $f^{\prime}_n$ given in (\ref{CompletePlusLeafDer}), we find that for $p \in \left(0,\tfrac{1}{2}\right)$,
\[ 
f^{\prime}_{n}(p) < g_{n}(p) = 2p-1+(n+1)(1-p)^{n-2}.
\]
We find that 
\[
\lim_{n \rightarrow \infty} \left\{g_{n}\left(\tfrac{1}{\ln n}\right)\right\} = -1  
\]
and
\[
\lim_{n \rightarrow \infty}  \left\{ng_{n}\left(\tfrac{1}{2}-\tfrac{1}{n}\right)\right\} = -2
\]
so that both $g_n\left(\tfrac{1}{\ln n}\right)<0$ and $g_n\left(\tfrac{1}{2}-\tfrac{1}{n}\right)<0$ for $n$ sufficiently large.
Also, 
\[
g^{\prime}_{n}(p) = 2-(n+1)(n-2)(1-p)^{n-3}
\] 
has a unique real root  
\[ 
q_{n}  = 1 - \left( \frac{2}{(n+1)(n-2)} \right)^{1/(n-3)} .
\]
As $\displaystyle\lim_{n \rightarrow \infty} \{q_n\ln n\} = 0$, it follows that for $n$ sufficiently large, $q_{n}$ is to the left of $\tfrac{1}{ \ln n}$, and so $g_{n}$, and hence $f^{\prime}_{n}$, is negative on $\left(\tfrac{1}{ \ln n}, \tfrac{1}{2}-\tfrac{1}{n}\right)$, which has length tending to $1/2$.

\section*{Acknowledgements}
 
The authors wish to thank the anonymous referees for their insightful comments and suggestions.  Research of Jason I.\ Brown is partially supported by grant RGPIN 170450-2013 from Natural Sciences and Engineering Research Council of Canada (NSERC).  Research of Lucas Mol is partially supported by an Alexander Graham Bell Canada Graduate Scholarship from NSERC.

\singlespacing

\bibliographystyle{amsplain}
\bibliography{NodeRelShape}

\end{document}